\newtheorem{Theorem}{Theorem}[section]
\newtheorem{Corollary}[Theorem]{Corollary}
\newtheorem{prop}[Theorem]{Proposition}
\newtheorem{lem}[Theorem]{Lemma}
\theoremstyle{definition}
\newtheorem{remark}[Theorem]{Remark}
\newtheorem{Definition}[Theorem]{Definition}
\renewcommand{\leq}{\leqslant}
\renewcommand{\bar}{\overline}
\DeclareMathOperator{\ih}{ih}
\author{Jonathan A. Kiehlmann 
\\ Dept.\ of Mathematics
\\ Imperial College London
\\ South Kensington Campus
\\ London SW7 2AZ 
\\ jonathan.kiehlmann@gmail.com}
\title{Classifications of Countably Based Abelian Profinite Groups}
\begin{document}

\maketitle

\begin{abstract}
 In the first half of this paper, we outline the construction of a new class of abelian pro-$p$ groups, which covers all countably based pro-$p$ groups.  
In the second half, we study them, and classify them up to topological isomorphism and abstract isomorphism.  
Topologically, any infinitely generated abelian pro-$p$ group decomposes as a product of infinitely many non-trival groups.  
It emerges that they are all abstractly isomorphic to Cartesian products of finite groups and $p$-adic integers.  
We have thus constructed uncountably many pairwise topologically non-isomorphic profinite groups abstractly isomorphic to a Cartesian product of cyclic groups.
\end{abstract}

In \cite{NS1}, Nikolov and Segal show that finitely generated profinite groups are strongly complete: that is, that their topology is entirely determined by their algebraic structure.  
Equivalently, any homomorphism from a finitely generated profinite group $G$ to a profinite group is continuous.  
In \cite{NS2}, Nikolov and Segal examine the images of possible non-continous homomorphisms from profinite groups.  
We investigate a question suggested by Segal: do there exist profinite groups which are abstractly, but not continuously, isomorphic?  
Through investigating the abelian case, we produce classifications of countably based abelian profinite groups, up to continuous isomorphism and up to abstract isomorphism.   
In particular, any countably based pro-$p$ group which contains torsion of unbounded exponent is isomorphic to the closure of its torsion elements, which is isomorphic to a Cartesian product of cyclic $p$-groups.  

This classifies all topological abelian group structures which can be put on the Cantor set.  
On the other hand, if $\alpha$ is an uncountable cardinal such that $2^{\alpha}=2^{\aleph_{0}}$,it is not hard to see that  $\prod_{\alpha}C_{p}$ and $\prod_{\aleph_{0}}C_{p}$ are an example of abstractly isomorphic, but topologically non-isomorphic profinite groups.  

Recall that a topological group is countably based if a point has a countable neighbourhood basis.  
As the open normal subgroups of a profinite group provide a neighbourhood base for the identity, a profinite group is countably based if and only if it has countably many open normal subgroups.  
This is equivalent to having countably many finite continuous images.  
In the abelian case, this is equivalent to having a countable Pontryagin dual.  
So profinite countably based abelian groups are dual to countable discrete torsion groups.  
However these have been classified.  

We apply Pontryagin duality to the traditional classification of countable abelian torsion groups, by Pr\"{u}fer, Ulm and Zippin, as outlined in \cite{Kaplansky} or \cite[chapter VI]{Fuchs}.  
The definition of the torsion sequence of a profinite group is new, but very natural.

In Section \ref{Torsion}, we introduce the definition of the torsion sequence of a profinite group, which is key to the structure of profinite abelian groups.  
In Section \ref{HowToBuild}, we describe the classification of countably-based abelian pro-$p$ groups and make some observations about their structure.  
In Section \ref{SI}, we determine which of these topological groups are abstractly isomorphic.  
Finally, in Section \ref{Universality} we show that any abelian pro-$p$ group with unbounded torsion contains closed subgroups continuously isomorphic to each abelian pro-$p$ group.  

The construction of Section \ref{HowToBuild} generalises to an uncountable case with a little work.  
That is, we can construct groups with torsion sequences of uncountable torsion type or with uncountably-based Cartesian factors.  
These are precisely the duals of the totally projective $p$-groups and the same results on abstract isomorphisms hold.  
This will be expanded on in \cite{JK}.

\begin{section}*{Notation}

Throughout, for $G_i, i\in I$ a family of groups, we write
$$\prod_{i\in I}G_{i}$$
for the (unrestricted Cartesian) product of the $G_{i}$, i.e., the set $\{(g_{i})_{i\in I} : g_{i}\in G_{i}\}$ with termwise addition (or multiplication).  
We write 
$$\bigoplus_{i\in I} G_{i}$$
for the direct sum (or restricted product) of the $G_{i}$, that is, the subgroup of the product consisting of strings $(g_{i})_{i\in I}$ with $g_i$ trivial for all but finitely many $i$.  
For $G$ a group and $\alpha$ a cardinal,  $(G)^{\alpha}$ denotes the product of $\alpha$ copies of $G$.  
For $n\in\mathbb{N}$, we write
$$G^n:=\{x^n\mid x\in G\}$$
for the $n$th powers in $G$, 
$$G[n]:=\{x\in G\mid x^n=1\}$$ 
for the elements of order dividing $n$ in $G$ and
$$t(G)=\bigcup_{n\in\mathbb{N}}G[n] $$
for the set of torsion elements in $G$.  
We shall say a that a group has unbounded torsion if it has torsion elements of unbounded exponent, and we shall say a sequence $(a_{i})_{i\in\mathbb{N}}$ with entries in $\mathbb{N}$ is unbounded if there is no upper bound for $\{a_{i}: i\in\mathbb{N}\}$.  

If $G$ is a profinite or discrete torsion group, $G^{*}$ denotes the Pontryagin dual of $G$.

Finally,
$$H\leq_{C} G$$
indicates that $H$ is a closed subgroup of $G$.  

\end{section}

\begin{section}{Torsion Sequences}\label{Torsion}

We classify abelian profinite groups by considering their Pontraygin duals.  
A profinite abelian group is the product of its pro-$p$ subgroups, and so we need only consider pro-$p$ groups.  

Recall that an element $x$ of a torsion group $T$ is said to be of infinite height if for every $n\in\mathbb{N}$ there is some $y$ in $T$ with $yn=x$.  

\begin{Theorem}\label{ClToInfHei}
Let $G$ be an abelian profinite group or a discrete abelian torsion group.  
Then,
$$(\bar{t(G)})^{*} \cong G^{*}/\bar{\ih(G^{*})}$$
as topological groups, where $\ih(G^{*})$ denotes the set of elements of infinite height in $G^{*}$.
\end{Theorem}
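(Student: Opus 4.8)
The plan is to pass everything through Pontryagin duality and reduce the statement to identifying a single annihilator subgroup of $G^{*}$. Write $H^{\perp}=\{\chi\in G^{*}\mid \chi|_{H}=1\}$ for the annihilator of a subset $H\subseteq G$. Pontryagin duality is an exact contravariant functor interchanging the profinite and the discrete-torsion worlds, and it carries a closed subgroup $K\leq_{C}G$ to a topological isomorphism $K^{*}\cong G^{*}/K^{\perp}$. Applying this with $K=\bar{t(G)}$, and noting that a continuous character kills $t(G)$ if and only if it kills its closure $\bar{t(G)}$ (so that $(\bar{t(G)})^{\perp}=t(G)^{\perp}$), the theorem reduces to the single claim that $t(G)^{\perp}=\bar{\ih(G^{*})}$.

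To compute this annihilator I would work one $n$ at a time, using the multiplication-by-$n$ endomorphism $[n]\colon G\to G$. Its kernel is $G[n]$ and its image is $G^{n}$; moreover functoriality of duality sends $[n]$ on $G$ to $[n]$ on $G^{*}$, since $\chi\circ[n]=\chi^{n}$. The standard kernel--image duality, $(\operatorname{im} f)^{\perp}=\ker\hat{f}$ and $(\ker f)^{\perp}=\bar{\operatorname{im}\hat{f}}$ for a continuous $f$, then yields $(G[n])^{\perp}=\bar{(G^{*})^{n}}$. Because $t(G)=\bigcup_{n}G[n]$ and the annihilator of a union is the intersection of the annihilators, this gives $t(G)^{\perp}=\bigcap_{n}\bar{(G^{*})^{n}}$.

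It remains to turn $\bigcap_{n}\bar{(G^{*})^{n}}$ into $\bar{\ih(G^{*})}=\bar{\bigcap_{n}(G^{*})^{n}}$, and this interchange of closure with intersection is the step I expect to be the crux. It is exactly here that the hypothesis — $G$ profinite or discrete torsion, so that $G^{*}$ is discrete or compact — is used. If $G^{*}$ is discrete every subset is closed; if $G^{*}$ is compact then $(G^{*})^{n}$, being the continuous image of the compact group $G^{*}$ under $[n]$, is compact and hence closed in the Hausdorff group $G^{*}$. Either way $\bar{(G^{*})^{n}}=(G^{*})^{n}$, so $t(G)^{\perp}=\bigcap_{n}(G^{*})^{n}=\ih(G^{*})$; being an intersection of closed subgroups this is already closed, whence $\bar{\ih(G^{*})}=\ih(G^{*})$. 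Substituting back into the reduction of the first paragraph gives $(\bar{t(G)})^{*}\cong G^{*}/\bar{\ih(G^{*})}$ as topological groups, as required.
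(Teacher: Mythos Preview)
Your proof is correct and follows essentially the same route as the paper: both arguments compute the annihilator of $\bar{t(G)}$ in $G^{*}$ by writing $t(G)=\bigcup_{n}G[n]$, identifying $\operatorname{Ann}_{G^{*}}(G[n])$ with $n\cdot G^{*}$, and intersecting over $n$ to obtain $\ih(G^{*})$. The only difference is that the paper invokes Lemma~2.9.11 of Ribes--Zalesskii for the step $(G[n])^{\perp}=n\cdot G^{*}$, whereas you derive it from kernel--image duality and are more explicit about why the closures in $\bar{(G^{*})^{n}}$ can be dropped under the hypothesis on $G$.
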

This is a strengthening of (\cite{RZ}, Theorem 2.9.12).  
\begin{proof}
By definition, Ann$_{G^{*}}(\bar{t(G)})$ is the least closed set containing 
$$\bigcap_{n\in\mathbb{N}}\text{Ann}_{G^{*}}(G[n])$$
which, by (\cite{RZ}, Lemma 2.9.11), is equal to
$$\bigcap_{n\in\mathbb{N}}{n}(G^{*})$$
which is of course the collection of all elements of infinite height in $G^{*}$.
\end{proof}
From this we can see that torsion-free pro-$p$ groups are dual to divisible $p$-groups (those where all elements are of infinite height).  
Divisible abelian groups are classified; a divisible abelian group is a direct sum of copies of the additive group of $\mathbb{Q}$ and of quasicyclic groups.  
From this, we can immediately derive the following.  

\begin{Corollary}
Let $\phi:G\to H$ be a continuous homomorphism from an abelian pro-$p$ group to a torsion-free group.  
Then there is some closed subgroup $K$ of $G$ isomorphic to $\prod_{I}\mathbb{Z}_{p}$ (isomorphic to the image of $\phi$), for some index set $I$, with 
$$G=K\times\ker\phi .$$
\end{Corollary}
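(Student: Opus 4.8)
The plan is to realise the image of $\phi$ concretely and then split the resulting extension by dualising, thereby avoiding any direct construction of a continuous section. First I would pin down $\phi(G)$: as a continuous epimorphic image of the compact group $G$ sitting inside the Hausdorff group $H$, it is compact, hence closed in $H$ and itself profinite; being a quotient of a pro-$p$ group it is pro-$p$, and being a subgroup of the torsion-free $H$ it is torsion-free. Thus $\phi(G)$ is a torsion-free abelian pro-$p$ group. By Theorem~\ref{ClToInfHei} (equivalently, by the remark following it that torsion-free pro-$p$ groups are dual to divisible $p$-groups), $\phi(G)^{*}$ is a divisible abelian $p$-group. The classification of divisible groups then forces $\phi(G)^{*}\cong\bigoplus_{I}C_{p^{\infty}}$ for some index set $I$ (no copies of $\mathbb{Q}$ occur, since $\phi(G)^{*}$ is a $p$-group), and dualising back gives $\phi(G)\cong\prod_{I}\Zp$.

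Next I would consider the short exact sequence of abelian profinite groups
$$1\longrightarrow\ker\phi\longrightarrow G\stackrel{\phi}{\longrightarrow}\phi(G)\longrightarrow 1,$$
all of whose maps are continuous. Applying Pontryagin duality, an exact contravariant anti-equivalence between profinite abelian groups and discrete torsion abelian groups, produces the short exact sequence of discrete groups
$$0\longrightarrow\phi(G)^{*}\longrightarrow G^{*}\longrightarrow(\ker\phi)^{*}\longrightarrow 0,$$
in which the leftmost term $\phi(G)^{*}$ is divisible. Since divisible abelian groups are injective $\mathbb{Z}$-modules, the subgroup $\phi(G)^{*}$ is a direct summand of $G^{*}$: we obtain a splitting $G^{*}=\phi(G)^{*}\oplus L$ with $L\cong(\ker\phi)^{*}$.

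Finally I would dualise this decomposition. Because duality carries a direct sum of two discrete groups to the direct product of their duals, the splitting $G^{*}=\phi(G)^{*}\oplus L$ yields a topological direct product $G\cong\phi(G)^{**}\times L^{*}\cong\phi(G)\times\ker\phi$. More concretely, the projection $G^{*}\to\phi(G)^{*}$ dualises to a continuous injection $\phi(G)\to G$ whose image $K$ is closed (being the image of a compact group in a Hausdorff group), satisfies $K\cong\phi(G)\cong\prod_{I}\Zp$, and one checks that $\phi$ restricts to an isomorphism $K\to\phi(G)$ while $L^{*}$ is identified with $\ker\phi$; hence $G=K\times\ker\phi$, as required.

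The only substantive input is the classical fact that a divisible subgroup of an abelian group is a direct summand; the remainder is bookkeeping with Pontryagin duality. Accordingly, the step demanding care is \emph{not} the algebraic splitting but its transport back to a genuine \emph{topological} direct product of closed subgroups. Attempting to build a continuous section $\phi(G)\to G$ by hand would be the real obstacle, and the virtue of the dual-side argument is that this topological splitting is automatic: duality is an anti-equivalence of categories sending finite direct sums to finite direct products, so the abstract summand decomposition on $G^{*}$ is returned as a decomposition of $G$ into closed subgroups with continuous projections.
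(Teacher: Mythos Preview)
Your argument is correct and follows exactly the route the paper indicates: dualise, use that $\phi(G)^{*}$ is divisible and hence a direct summand of $G^{*}$, and dualise back. The paper's own proof is the one-line remark ``This follows immediately from Pontryagin duality and the fact that a divisible subgroup of an abelian [group] is a direct summand''; you have simply unpacked that sentence carefully, including the identification $\phi(G)\cong\prod_{I}\Zp$ and the verification that the resulting decomposition is a genuine topological product with the correct factors.
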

This follows immediately from Pontryagin duality and the fact that a divisible subgroup of an abelian is a direct summand.  
Hence all torsion-free quotients of abelian pro-$p$ groups split, and all torsion-free abelian pro-$p$ groups are free.  

This is stronger than (\cite{Serre}, I.1, Exercise 1).  
This corollary says that any torsion-free quotient of an abelian pro-$p$ group has a complement, and so splits as a direct factor.  
All profinite groups are reduced in the traditional sense of having trivial maximal divisible subgroup; a divisible subgroup of any group must be contained in the finite residual, which is trivial, by residual finiteness.  
\begin{Definition}
We shall say a profinite group is \emph{dual-reduced} if it has no non-trivial continuous torsion-free quotients: Theorem \ref{ClToInfHei}, shows this is equivalent to having a reduced dual.
\end{Definition}

\begin{Theorem}\label{Peeling}
Let $G$ be a pro-$p$ abelian group.  
Then 
$$G=G_{0}\times (\mathbb{Z}_{p})^{X}$$
for some cardinal $X$, where $G_{0}$ is a maximal dual-reduced subgroup.
\end{Theorem}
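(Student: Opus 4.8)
The plan is to push everything through Pontryagin duality, as in the rest of this section, and to read the decomposition off the structure theory of discrete abelian $p$-groups. Since $G$ is a pro-$p$ abelian group, its dual $A:=G^{*}$ is a discrete abelian $p$-group. Every abelian group is the direct sum of its (unique) maximal divisible subgroup and a reduced subgroup, so I would write $A=D\oplus R$ with $D$ divisible and $R$ reduced (see \cite[chapter VI]{Fuchs}); the splitting is available precisely because a divisible subgroup of an abelian group is always a direct summand. As $A$ is a $p$-group, $D$ is a divisible $p$-group, and the classification of divisible groups forces $D\cong\bigoplus_{X}C_{p^{\infty}}$ for some cardinal $X$, where $C_{p^{\infty}}$ is the quasicyclic group.

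I would then dualise this decomposition. Duality is contravariant and carries a direct sum of discrete groups to the product of their compact duals, so, using $G\cong G^{**}$, it gives a topological isomorphism
$$G\cong A^{*}\cong D^{*}\times R^{*}.$$
Since $C_{p^{\infty}}^{*}\cong\mathbb{Z}_{p}$, the same interchange of sums and products gives $D^{*}\cong(\mathbb{Z}_{p})^{X}$, while, as $R$ is reduced, $G_{0}:=R^{*}$ is dual-reduced by the definition of dual-reduced. This already produces a decomposition of the required shape $G=G_{0}\times(\mathbb{Z}_{p})^{X}$. The one point I would treat with care here is that the purely algebraic splitting $A=D\oplus R$ really does dualise to a topological direct product: this rests on the standard Pontryagin facts that $(-)^{*}$ interchanges infinite direct sums with infinite products and that $C_{p^{\infty}}^{*}\cong\mathbb{Z}_{p}$, and it is these that convert the algebra into a statement respecting the profinite topology.

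It remains to see that $G_{0}$ is \emph{maximal} among dual-reduced subgroups, which I would verify through annihilators and which I expect to be the only genuinely new step. Under the decomposition $\mathrm{Ann}_{G^{*}}(G_{0})=D$ and $G_{0}^{*}\cong G^{*}/D\cong R$. If $G_{0}\leq_{C}H\leq_{C}G$ with $H$ dual-reduced, then $\mathrm{Ann}_{G^{*}}(H)\leq\mathrm{Ann}_{G^{*}}(G_{0})=D$ and $H^{*}\cong G^{*}/\mathrm{Ann}_{G^{*}}(H)$ is reduced; but $D/\mathrm{Ann}_{G^{*}}(H)$ sits inside this quotient and, being a quotient of a divisible group, is divisible, so it must be trivial. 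Hence $\mathrm{Ann}_{G^{*}}(H)=D$ and $H=G_{0}$, and maximality follows formally from the uniqueness of the maximal divisible subgroup $D$ of $A$ reflected back through duality.
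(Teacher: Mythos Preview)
Your argument is correct and follows exactly the route the paper takes: dualise, split the discrete $p$-group $G^{*}$ as its maximal divisible part plus a reduced complement, and dualise back using $C_{p^{\infty}}^{*}\cong\mathbb{Z}_{p}$. The paper states this in a single line and leaves the maximality of $G_{0}$ implicit, whereas you supply an explicit annihilator argument for it; that extra paragraph is sound and is the only place you go beyond what the paper writes.
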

As $\mathbb{Z}_{p}^{*}$ is isomorphic to the quasicyclic group $C_{p^{\infty}}$, this follows from considering the dual of a discrete $p$-group, which must decompose as a sum of a reduced and (maximal) divisible part.

We introduce a topologically characteristic series of subgroups of a profinite group. 
\begin{Definition}
Let $G$ be an arbitrary abelian pro-$p$ group.

Set $T_{0}(G)$ to be the trivial subgroup.  
Now, we recursively define
$$T_{\alpha+1}(G)/T_{\alpha}(G)=\bar{t(G/T_{\alpha}(G)}$$
for any ordinal $\alpha$ and
$$T_{\delta}(G)=\bar{\langle  T_{\alpha}(G): \alpha<\delta\rangle}$$
for $\delta$ any limit ordinal.  

We call this chain the \emph{torsion series} of $G$.  
As this chain is increasing there will be some ordinal $\tau$ such that $T_{\tau}(G)=T_{\tau+1}(G)$.  
We call the least such $\tau$ the \emph{torsion type} of $G$.

We set $G_{T_{\alpha}}$ to be $T_{\alpha+1}(G)/T_{\alpha}(G)$.  
Now, we call the well-ordered transfinite sequence of quotients  
$$ G_{T_{0}}, G_{T_{1}}, \ldots, G_{T_{\alpha}}, \ldots$$
the \emph{torsion sequence} of $G$.
\end{Definition}

\begin{remark}
This sequence, by Theorem \ref{ClToInfHei}, is dual to the Ulm sequence of a discrete torsion group (as defined in \cite{Fuchs} \S35).  
That is, the $\alpha$-th term of the Ulm sequence of $G^{*}$ is isomorphic to the dual of $G_{T_{\alpha}}$.   
Ulm's Theorem (\cite{Fuchs}, Theorem 37.1) says that the isomorphism type of a countable group is determined by its Ulm sequence.  
\end{remark}
\begin{remark}
The torsion type of a group is not always the supremum of the torsion types of its subgroups.  
Indeed, Section \ref{Universality} shows that any countably based dual-reduced pro-$p$ group is isomorphic to a closed subgroup of any countably based dual-reduced pro-$p$ group which contains torsion of unbounded order.  
On the other hand, all continuous quotients of countably based dual-reduced pro-$p$ groups will have torsion types bounded by that of the original group.  
\end{remark}

The torsion sequence completely determines the topological-group structure of the dual-reduced part of a countably-based abelian pro-$p$ group.  

\begin{Theorem}\label{proUlm}
Let $G, H$ be countably based abelian dual-reduced pro-$p$ groups with the same torsion sequence.  
Then $G$ and $H$ are isomorphic (as topological groups).
\end{Theorem}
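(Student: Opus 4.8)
The plan is to transport the statement across Pontryagin duality and reduce it to the classical theorem of Ulm for countable reduced abelian $p$-groups, exactly as signalled in the remark preceding the statement. First I would pass to the duals $A=G^{*}$ and $B=H^{*}$. Since $G$ and $H$ are countably based abelian pro-$p$ groups they have countable Pontryagin duals, so $A$ and $B$ are countable discrete abelian $p$-groups; since $G$ and $H$ are dual-reduced, $A$ and $B$ are reduced. They therefore lie in the class to which Ulm's theorem (\cite{Fuchs}, Theorem 37.1) applies, and it suffices to produce an abstract isomorphism $A\cong B$.

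The substance is to read the full list of Ulm invariants of $A$ off the torsion sequence of $G$. Iterating Theorem \ref{ClToInfHei} and using that $\ih(A)=\bigcap_{n}p^{n}A=p^{\omega}A$, I would show by transfinite induction that the annihilator $\text{Ann}_{A}(T_{\alpha}(G))$ is the Ulm subgroup $p^{\omega\alpha}A$; the successor step is precisely one application of Theorem \ref{ClToInfHei} (whose proof identifies the annihilator of a torsion-closure with $\ih$), and the limit step uses that annihilators turn the closure $\bar{\langle T_{\beta}:\beta<\delta\rangle}$ into the intersection $\bigcap_{\beta<\delta}p^{\omega\beta}A=p^{\omega\delta}A$. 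Consequently $G_{T_{\alpha}}^{*}\cong p^{\omega\alpha}A/p^{\omega(\alpha+1)}A$. Each such factor is a countable reduced $p$-group with no elements of infinite height, hence by Pr\"{u}fer's theorem a direct sum of cyclic groups, and its isomorphism type records exactly the Ulm invariants $f_{A}(\omega\alpha+n)$ for all finite $n$, namely the number of cyclic summands of each order. Ranging over $\alpha$ recovers every Ulm invariant of $A$. The hypothesis that $G$ and $H$ share a torsion sequence gives $G_{T_{\alpha}}\cong H_{T_{\alpha}}$ for all $\alpha$, and hence $A$ and $B$ have identical Ulm invariants.

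Ulm's theorem now supplies an abstract group isomorphism $A\cong B$. As $A$ and $B$ carry the discrete topology this map is automatically a topological isomorphism, so applying the contravariant Pontryagin duality functor yields topological isomorphisms $G\cong G^{**}\cong H^{**}\cong H$, which is the claim.

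The main obstacle is the transfinite bookkeeping of the middle paragraph: one must check with care that iterating the closure-of-torsion construction dualizes to descending the Ulm filtration by a full $\omega$ at each stage, and in particular that at limit ordinals the closure $\bar{\langle T_{\beta}:\beta<\delta\rangle}$ dualizes to the intersection of the Ulm subgroups and not to something strictly larger. Alongside this one must track that all the relevant subquotients stay countable and reduced, so that Pr\"{u}fer's theorem applies to each factor and Ulm's theorem applies to $A$ and $B$. These verifications are where the real work lies; once the duality dictionary of Theorem \ref{ClToInfHei} is in place, the remaining deductions are formal.
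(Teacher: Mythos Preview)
Your proposal is correct and follows essentially the same route as the paper's proof: dualize, use Theorem~\ref{ClToInfHei} iteratively to identify the torsion sequence of $G$ with the Ulm sequence of $G^{*}$, invoke Ulm's theorem, and dualize back. You supply more detail than the paper does---in particular the transfinite induction and the limit-ordinal check on annihilators---but the underlying argument is the same; the passage through Ulm invariants rather than Ulm factors is a cosmetic difference, since Ulm's theorem in \cite{Fuchs} is stated in terms of the Ulm sequence directly.
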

By ``the same torsion sequence" we mean that $G$ and $H$ are of the same torsion type and that $G_{T_{\alpha}}\cong H_{T_{\alpha}}$ for each $\alpha$.

\begin{proof} 
The dual groups $G^{*}, H^{*}$ are countable abelian $p$-groups.   
Repeated applications of Theorem \ref{ClToInfHei}, shows that the dual of the torsion sequence of a pro-$p$ group is isomorphic to Ulm sequence of its dual.  
Hence, by Ulm's Theorem, (\cite{Fuchs}, Theorem 37.1),  $G^{*} \cong H^{*}$.  
Now, by Pontryagin duality $G$ and $H$ are topologically isomorphic.  
\end{proof}

An obvious question is ``Can we construct pro-$p$ groups with arbitrary torsion sequences?".  
We can answer this positively.  

\end{section}

\begin{section}{How to Build Abelian Pro-$p$ Groups}\label{HowToBuild}
In the following section we describe how to construct an arbitrary totally injective abelian pro-$p$ 
group.  

\begin{Definition}
For $X,Y$ abelian profinite groups $x\in X, y\in Y$, we write \emph{the profinite presentation}
$$\langle X,Y : x=y\rangle$$
to denote the quotient $X\times Y/\bar{\langle x-y\rangle}$.  
\end{Definition}

We will only use this in a case where $Y=\bar{\langle y\rangle}\cong\mathbb{Z}_{p}$ , the additive group of the $p$-adic integers, and with a relation of the form $p^{a}y=z$, for some infinite order element of $X\setminus(t(X) + {p}X)$.  
It is elementary to see that this produces a profinite abelian extension of $X$ by $C_{p^{a}}$.
  
\begin{Definition}
For convenience, we shall say an abelian pro-$p$ group is \emph{Cartesian} if it is a Cartesian product of a collection of cyclic $p$-groups. 
\end{Definition}

It is helpful to consider only those profinite groups with closure of torsion elements a Cartesian product of cyclic $p$-groups.  
It transpires that this class includes all countably based pro-$p$ groups.  

\begin{prop}
Let $G$ be a countably based pro-$p$ group.  
Then $\bar{t(G)}$, and hence each term of the torsion sequence of $G$ is Cartesian.  
\end{prop}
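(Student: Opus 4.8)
The plan is to pass to Pontryagin duals and reduce to a classical theorem of Pr\"{u}fer. The key dictionary is that a profinite abelian group is Cartesian if and only if its dual is a direct sum of finite cyclic $p$-groups: for finite $A_{i}$ one has $(\prod_{i}A_{i})^{*}\cong\bigoplus_{i}A_{i}^{*}$ (a continuous character of a profinite group factors through a finite quotient, hence is supported on finitely many coordinates), and each finite cyclic group is self-dual, so $\prod_{i}C_{p^{n_{i}}}$ and $\bigoplus_{i}C_{p^{n_{i}}}$ are dual to one another. By the Pontryagin duality theorem it therefore suffices to prove that $(\overline{t(G)})^{*}$ is a direct sum of cyclic $p$-groups.

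To identify this dual, I would apply Theorem \ref{ClToInfHei}, which gives $(\overline{t(G)})^{*}\cong G^{*}/\overline{\ih(G^{*})}$. Since $G$ is countably based, $G^{*}$ is a countable discrete abelian $p$-group; as every subset of a discrete group is closed, $\overline{\ih(G^{*})}=\ih(G^{*})$. In a $p$-group multiplication by any integer prime to $p$ is an automorphism, so $\ih(G^{*})=\bigcap_{n}nG^{*}=\bigcap_{n}p^{n}G^{*}$, the first Ulm subgroup $p^{\omega}G^{*}$. A short computation, using $p^{\omega}G^{*}\subseteq p^{n}G^{*}$ for all $n$, shows that the quotient $G^{*}/p^{\omega}G^{*}$ itself has trivial first Ulm subgroup, i.e.\ no non-trivial element of infinite height.

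The substantive input is now Pr\"{u}fer's theorem: a countable abelian $p$-group with no elements of infinite height is a direct sum of cyclic groups (\cite{Kaplansky} or \cite{Fuchs}). Concretely this follows from Kulikov's criterion, by writing the (countable) group as the union of an ascending chain of subgroups on which heights, computed in the whole group, are bounded. Applying this to $G^{*}/p^{\omega}G^{*}$ and dualising back through the dictionary of the first paragraph yields that $\overline{t(G)}$ is Cartesian. This classical theorem is the only real obstacle; the remaining work is a transcription through duality, the delicate points being the product--sum duality for Cartesian groups and the identification of $\ih(G^{*})$ with $p^{\omega}G^{*}$.

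For the final clause, note that by definition $G_{T_{\alpha}}=T_{\alpha+1}(G)/T_{\alpha}(G)=\overline{t(G/T_{\alpha}(G))}$. Each $T_{\alpha}(G)$ is closed, so $G/T_{\alpha}(G)$ is a pro-$p$ group, and it is countably based because its dual embeds in the countable group $G^{*}$. Hence the first part applies to the quotient $G/T_{\alpha}(G)$, and each term $G_{T_{\alpha}}$ of the torsion sequence is Cartesian.
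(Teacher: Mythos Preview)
Your proof is correct and follows essentially the same route as the paper: apply Theorem~\ref{ClToInfHei} to identify $(\overline{t(G)})^{*}$ with $G^{*}/\ih(G^{*})$, observe this is a countable $p$-group with no elements of infinite height, invoke Pr\"{u}fer's theorem, and dualise back via the product--sum duality. You supply more detail than the paper (in particular, you make explicit why $G^{*}/p^{\omega}G^{*}$ has trivial first Ulm subgroup, and you spell out the induction for the torsion sequence), but the argument is the same.
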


\begin{proof}
By induction, it is sufficient only to consider $\bar{t(G)}$.  

If $G$ is a countably based pro-$p$ group, then $G^{*}$ is a countable discrete abelian $p$-group.  
Now, $(\bar{t(G)})^{*}$ will be isomorphic to $G^{*}/\ih(G^{*})$, which is a countable abelian $p$-group without non-trivial elements of infinite height.  
By a well known result of Pr\"{u}fer, (\cite{Fuchs}, Theorem 11.3) this must be isomorphic to a direct sum of cyclic $p$-groups.  
The dual of a direct sum of finite abelian $p$-groups is isomorphic to the Cartesian product of these groups (\cite{RZ}, Theorem 2.9.4). 
Hence $\bar{t(G)}$ will be isomorphic to a Cartesian product of cyclic $p$-groups, that is, Cartesian.  
\end{proof}

\begin{Theorem}\label{Construction}
Let $\tau$ be a countable ordinal.  
For $\alpha<\tau$, let $N_{\alpha}$ a Cartesian abelian pro-$p$ group, such that every non-final term of the sequence $(N_{\alpha})$ is of unbounded exponent.  
Then there exists a dual-reduced abelian pro-$p$ group $G$ with torsion sequence $(N_{\alpha})$.
\end{Theorem}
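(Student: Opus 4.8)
The plan is to reduce the topological construction to a purely algebraic one and then to build $G$ explicitly by transfinite recursion using the profinite presentations introduced above. By Theorem~\ref{ClToInfHei} together with Pontryagin duality, exhibiting a dual-reduced pro-$p$ group $G$ with torsion sequence $(N_\alpha)_{\alpha<\tau}$ is equivalent to exhibiting a countable reduced abelian $p$-group $A$ (which will be $G^{*}$) whose successive quotients $p^{\omega\alpha}A/p^{\omega(\alpha+1)}A$ are isomorphic to $N_\alpha^{*}$. Each $N_\alpha$ is Cartesian, so each $N_\alpha^{*}$ is a direct sum of cyclic $p$-groups (dualising Pr\"ufer's theorem as in the proposition above), which is exactly the shape an Ulm factor of a reduced group must have; and the hypothesis that the non-final $N_\alpha$ have unbounded exponent is precisely the classical necessary-and-sufficient condition for such a sequence of Ulm factors to be realisable. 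Thus one route is simply to invoke Zippin's existence theorem (the existence companion to the Ulm uniqueness theorem quoted in Theorem~\ref{proUlm}) and dualise. I would instead give the direct profinite construction, both to keep the section self-contained and because it is the construction referred to in the sequel.

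For the direct construction, write each layer as $N_\alpha=\prod_{i\in I_\alpha}C_{p^{n(\alpha,i)}}$ with $I_\alpha$ countable, and start from $\mathcal F=\prod_{\alpha<\tau}\prod_{i\in I_\alpha}\Zp$, a countably based Cartesian power of $\Zp$ with distinguished infinite-order generators $e_{\alpha,i}$, one for each cyclic factor. I would then impose, for every $\alpha$ and $i$, a relation of the form $p^{n(\alpha,i)}e_{\alpha,i}=z_{\alpha,i}$ by means of the profinite presentation $\langle\,\cdot\,,\Zp e_{\alpha,i}:p^{n(\alpha,i)}e_{\alpha,i}=z_{\alpha,i}\rangle$, where each target $z_{\alpha,i}$ is chosen among the infinite-order elements supplied by the layers below $\alpha$, and set $G=\mathcal F/\overline{R}$ for $\overline{R}$ the closed subgroup generated by all the relators $p^{n(\alpha,i)}e_{\alpha,i}-z_{\alpha,i}$. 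Writing $T_\alpha$ for the image in $G$ of the closure of $\langle e_{\beta,i}:\beta<\alpha\rangle$, the elementary computation recorded after the definition of the profinite presentation shows that each single attachment extends the group below by a copy of $C_{p^{n(\alpha,i)}}$, so that $T_{\alpha+1}/T_\alpha\cong N_\alpha$; at a limit ordinal $\delta$ one takes $T_\delta=\overline{\langle T_\alpha:\alpha<\delta\rangle}$, matching the definition of the torsion series. The targets $z_{\alpha,i}$ must be selected so that the generators $e_{\alpha,i}$ become torsion of order exactly $p^{n(\alpha,i)}$ modulo $T_\alpha$ while retaining infinite order in $G$; it is here that the unboundedness hypothesis is used, since a non-final layer must itself contain infinite-order elements (as $\prod_i C_{p^{n(\alpha,i)}}$ does precisely when the exponents $n(\alpha,i)$ are unbounded) in order to furnish the targets for the next layer and thereby push the torsion series strictly upward.

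The main obstacle is the verification that this $G$ has torsion series exactly $(T_\alpha)_{\alpha\leq\tau}$, with torsion type exactly $\tau$ and no unexpected torsion. Concretely one must check, by transfinite induction on $\alpha$, that $\bar{t(G/T_\alpha)}=T_{\alpha+1}/T_\alpha$; that is, that (i) the relators neither collapse the intended quotients nor create torsion below the prescribed level, (ii) $G$ has no continuous torsion-free quotient beyond $T_\tau$ (so that $G$ is dual-reduced and the type is exactly $\tau$), and (iii) the closures taken at limit stages commute with the formation of torsion subgroups. I expect (iii), the limit-ordinal bookkeeping, to be the delicate point, since it is there that one must confirm the relevant elements acquire genuinely infinite height rather than merely large finite height. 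I would discharge all three by dualising each stage of the construction and comparing Ulm invariants against those of the group $A$ described above, which converts the transfinite height calculation into the already-understood Ulm-invariant calculation and lets the uniqueness statement of Theorem~\ref{proUlm} serve as a consistency check.
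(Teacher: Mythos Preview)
Your first paragraph is a valid proof: dualising Zippin's existence theorem gives the result immediately, and the paper explicitly acknowledges this is the underlying idea. If that were the whole proposal I would simply say it matches the paper's approach.

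The direct construction you prefer, however, differs structurally from the paper's and has a real gap. The paper does not build $G$ as a single global quotient of $\prod_{\alpha,i}\Zp$; it proceeds by transfinite induction on $\tau$, splitting into five cases (successor of a successor, successor of a limit, limit; final layer cyclic vs.\ not). The cyclic cases are the substantive ones: given $H$ with torsion sequence $(N_\alpha)_{\alpha<\tau-1}$, one forms $G=\langle H,\,X=\overline{\langle x\rangle}:p^{r}x=\delta\rangle$ where $\delta\in H$ is chosen so that its image in \emph{every} cyclic factor of $H/T_{\tau-2}(H)$ is a generator. This very specific choice of $\delta$ is what drives the concrete verification that $T_\alpha(G)=T_\alpha(H)$ for all $\alpha\leq\tau-1$; the argument traces a hypothetical torsion element back to the coset $s\delta+pH$ and uses the generator condition to reach a contradiction. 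Your proposal says only that each $z_{\alpha,i}$ should be ``chosen among the infinite-order elements supplied by the layers below $\alpha$'', which is not enough: an arbitrary infinite-order target, or one lying in $pH+t(H)$, will not force the torsion series to line up, and in the global presentation you have not even said what $z_{0,i}$ is.

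The second gap is the verification itself. You correctly identify it as the main obstacle and then propose to discharge it by dualising and comparing Ulm invariants with the group $A$ furnished by Zippin. But that reinstates the dependence on Zippin you set out to avoid, so the ``self-contained'' direct construction is not actually self-contained; and since the sequel (Proposition~\ref{Prod}, Theorem~\ref{SAI}) uses the explicit inductive decomposition into products and cyclic-top extensions, a global presentation checked only via duality would not supply what those later arguments need. The missing ingredient is precisely the paper's choice of $\delta$ and the short coset computation that follows from it.
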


Our proof comes largely from applying Pontryagin duality to the proof of Zippin's Theorem given in (\cite{Fuchs}, \S36).  
We give this dual construction (which does not seem to have appeared previously in the literature) as analysis of this construction is useful for proving Theorem \ref{SAI}.  

\begin{proof}
We prove the first statement and proceed by induction on $\tau$.  

The base case is $\tau=2$, where we have one group which is the closure of its torsion elements, and  $G=N_{0} is$ a Cartesian group.  

We now split into five cases.
\newline
Case I: $\tau-2$ exists, and $N_{\tau-1}$ is a cyclic group, of order $p^{r}$.

By induction, there exists a group $H$ with torsion sequence $(N_{0}, \ldots, N_{\tau-2})$.  
Now, $H/T_{\tau-2}(H)\cong \prod_{i\in\mathbb{N}} X_{i}$, where each $X_{i}$ is a cyclic group.  
So we have canonical epimorphisms $\theta_{i}:H\to X_{i}$, for $i\in\mathbb{N}$.  
Choose $\delta\in H$ which is sent by each $\theta_{i}$ to an element in $X_{i}$ which generates that group.  
Then, take the abelian group with profinite presentation (as defined above)
$$G=  \langle H, X=\bar{\langle x\rangle} : p^{r}x=\delta\rangle$$
where $X$ is an infinite procyclic pro-$p$ group topologically generated by $x$.  

We claim that $T_{\alpha}(G)=T_{\alpha}(H)$ for each $\alpha < \tau -1$.  
Suppose otherwise.  
Take $\beta\leq \tau-2$ to be minimal such that $T_{\beta}(G)\neq T_{\beta}(H)$.  
If $\beta$ is a limit ordinal
$$T_{\beta}(H) = \bar{\bigcup_{\mu<\beta} T_{\mu}(H)} = \bar{\bigcup_{\mu<\beta} T_{\mu}(G)} = T_{\beta}(G)$$
and we have a contradication.  

So $\beta$ is a successor ordinal: write $\beta=\gamma +1$.  

Note that $T_{\gamma+1}(G)$ is not contained in $H$.  
Otherwise, 
$$T_{\gamma+1}(G)/T_{\gamma}(G) = T_{\gamma+1}(G)/T_{\gamma}(H)$$ 
is a closed subgroup of $H/T_{\gamma}(H)$, and so $T_{\gamma+1}(G)\subseteq T_{\gamma+1}(H)$.  
Then 
$$T_{\gamma+1}(G)=T_{\gamma+1}(H),$$ 
contradicting  our assumption on $\beta$.  

Take some $a\in T_{\gamma+1}(G)\setminus H$ which is of finite order modulo $T_{\gamma}(G)=T_{\gamma}(H)$.  
As $\gamma=\beta-1 \leq \tau-2$, $a+ T_{\tau-2}(H)$ is of finite order.  
So, we have $a\in n\delta+H$ for some $0 <n\leq p^{r}-1$, of form $n=p^{k}s$, for some $s$ coprime to $p$.  
Then ${p^{r-k}}a\in H$, and so ${p^{r-k}}a\equiv \delta^{s}$ mod $pH$.  
But we chose $\delta$ so that $\delta + T_{\tau-2}(H)$ must be torsion-free in $H/T_{\tau-2}(H)$, so $\delta^{s}+pH$ cannot contain elements of finite order in $H/T_{\tau-2}(H)$.  
This is a contradiction.  
Hence $T_{\tau-1}(G)=H$ and $T_{\alpha}(G)=T_{\alpha}(H)$, for each $\alpha\leq \tau-1$.  

Thus, as $G/H$ is cyclic of order $p^{r}$, it follows that $G$ has the required torsion sequence.  
\newline
Case II: $\tau-2$ exists, and $N_{\tau-1}$ is not cyclic.  

We can find cyclic groups, $\{N_{\tau-1,n}\}_{n\in\mathbb{N}}$ such that $N_{\tau-1}=\prod_{n\in\mathbb{N}}N_{\tau-1, n}$.  
For each $\varepsilon < \tau-1$, we can find some decomposition $N_{\varepsilon}=\prod_{n\in\mathbb{N}}N_{\varepsilon, n}$ with each $N_{\varepsilon, n}$ Cartesian of unbounded exponent.  
For example, if $N_{\varepsilon}=\prod_{i\in\mathbb{N}}C_{p^{i}}$, we can take any partition of $\mathbb{N}$ into infinitely many pairwise disjoint subsets $(K_{t})_{t\in T}$, with $\bigcup_{t\in T} K_{t}=\mathbb{N}$ and write $N_{\epsilon}=\prod_{t\in T}\prod_{i\in K_{t}} C_{p^{i}}$.     

Now, by Case I, we can construct groups $H_{n}$ with Ulm sequence $N_{0,n}, \ldots, N_{\tau-1, n}$, for each $n$.  
Now 
$$G=\prod_{n\in\mathbb{N}}H_{n}$$
 has the required torsion sequence, as the product $\prod _{n\in\mathbb{N}}T_{\alpha}(H_{n})$ will be isomorphic to $T_{\alpha}(\prod_{n\in\mathbb{N}}H_{n})$.  
\newline
Case III: $\tau$ is a limit ordinal.  

Take a set $\{\sigma_{n}: n\in\mathbb{N}\}$ of ordinals less than and with supremum $\sigma$.

We can use methods as in Case II above to find Cartesian groups $N_{\varepsilon,n}$ such that $\prod_{n}N_{\varepsilon,n}=N_\varepsilon$ with each $N_{\varepsilon,n}$ is of unbounded exponent if $\varepsilon<\sigma_{n}$ and trivial if and only if $\sigma_{n}<\varepsilon$.  

By inductive hypothesis we can construct a group $H_{n}$ with torsion sequence $(N_{\varepsilon,n})$, for each $n$.  
The group 
$$G=\prod_{n\in\mathbb{N}}H_{n}$$
has the required torsion sequence, as the torsion sequence of a product is the product of torsion sequences, as in Case II.
\newline
Case IV: $\tau-1$ exists and is a limit ordinal, $\sigma$, and $N_{\sigma}$ is cyclic of order $p^{r}$.  

Take a set $\{\sigma_{n}: n\in\mathbb{N}\}$ of ordinals less than and with supremum $\sigma$.  

Construct, as in Case III, new Cartesian groups $N_{\varepsilon,n}$ with $\prod_{n}N_{\varepsilon,n}=N_\varepsilon$ such that $N_{\varepsilon,n}$ is of unbounded exponent if $\varepsilon\leq\sigma_{n}$ and trivial if $\sigma_{n}<\varepsilon$.  
Furthermore, without loss of generality, we can choose each $N_{\sigma_{n},n}$ to be cyclic.  
By the inductive hypothesis we can construct a group $H_{n}$ with torsion sequence $(N_{\varepsilon,n})$, for each $n$.  
For each $n$, we can choose $\gamma_{n}\in H_{n}$ such that $\gamma_{n}+T_{\sigma_{n}-1}( H_{n}) $ is a generator for $H_{n}/T_{\varepsilon_{n}-1}( H_{n})$.  
Choose $\gamma\in H=\prod_{n}H_{n}$ such that, for each $n$, the canonical epimorphism $\phi_{n}:H\to H_{n}$ sends $\gamma$ to $\gamma_{n}$.  
Note that $H$ has torsion type $\tau-1$ and torsion sequence $(N_{\varepsilon})_{\varepsilon<\tau}$.  

In the same way as Case I, we construct an extension of a Cartesian product of groups by $C_{p^{r}}$.  
Consider
$$G=\langle\prod_{n}H_{n}, X=\bar{\langle x\rangle} : p^{r}x=\gamma\rangle$$
where $X$ is an infinite procyclic pro-$p$ group topologically generated by $x$.  
Now $G/H\cong C_{p^{r}}$ and so to show $G$ has the required torsion sequence, it remains only to show $T_{\alpha}(G)=T_{\alpha}(H)$ for each $\alpha \leq \tau$.  

Suppose otherwise: we can pick least ordinal $\beta$ with $T_{\beta}(G)\neq T_{\beta}(H)$.  
This must be a successor ordinal, $\beta$, by the definition of the torsion sequence.  
Pick some $n$ with $\sigma_{n}>\beta$.  
Now, for $a \in T_{\beta}(G)\setminus H$, $a$ must be a torsion element modulo $T_{\sigma_{n}}(H)$.  
We have $a\in n\gamma+ H$, for some $0 <n\leq p^{r}-1$, of form $n=p^{k}s$, for some $s$ coprime to $p$.  
Then ${p^{r-k}}a\in H$, and so ${p^{r-k}}a\equiv \gamma^{s}$ mod $pH$.  
But we chose $\gamma$ so that $\gamma + T_{\sigma_{n}}(H)$ must be torsion-free in $H/T_{\sigma_{n}}(H)$, so $\gamma^{s}+pH$ cannot contain elements of finite order in $H/T_{\sigma_{n}}(H)$.  
This is a contradiction.  
Hence $T_{\alpha}(G)=T_{\alpha}(H)$ for each $\alpha\leq \tau-1$ and $T_{\tau-1}(G)=H$.  
\newline
Case V: $\tau-1$ exists and is a limit ordinal, $\sigma$, and $N_{\sigma}$ is not cyclic.  

This follows from Case IV in exactly the same way as Case II follows from Case I.  
As in Case II, we find cyclic groups, $\{N_{\sigma,n}\}_{n\in\mathbb{N}}$ such that $N_{\sigma}=\prod_{n\in\mathbb{N}}N_{\sigma, n}$.  
For each $\varepsilon < \sigma$, we can find some decomposition $N_{\varepsilon}=\prod_{n\in\mathbb{N}}N_{\varepsilon, n}$ with each $N_{\varepsilon, n}$ Cartesian of unbounded exponent.  

By the inductive hypothesis we can construct a group $H_{n}$ with torsion sequence $(N_{\alpha,n})_{\alpha \leq \sigma}$, for each $n$.  
But then 
$$G=\prod_{n\in\mathbb{N}}H_{n}$$
has the required torsion sequence, as before.
\end{proof}

In the proof, we use the axiom of choice extensively and have many choices of ways to decompose Cartesian groups of unbounded exponent as products of infinitely many Cartesian groups of unbounded exponent.  
The choices made at each stage do not matter, as Theorem \ref{proUlm} shows.  

By Theorem \ref{Peeling}, Theorem \ref{Construction} and Theorem \ref{proUlm} we have thus classified all countably based abelian dual-reduced pro-$p$ groups.  
Hence we can construct any countably based abelian profinite group as a product of its Sylow pro-$p$ subgroups.  

This completely classifies countably based profinite groups up to continuous isomorphism.

\begin{prop}\label{Prod}
Let $G$ be an infinite countably based dual-reduced abelian pro-$p$ group of torsion type $\tau$.  
Then 
\begin{enumerate}
\item We have $G=\prod_{i\in I} K_{i}$ for infinite index set $I$ and non-trivial closed subgroups $K_{i}$.  
\item Furthermore, we can chose $K_{i}$ such that 
\begin{enumerate} 
\item If $G_{T_{\tau-1}}$ is finite, each $K_{i}$ is of torsion type at least $\tau-1$.  
\item If $G_{T_{\tau-1}}$ is infinite or $\tau$ is a limit ordinal, each $K_{i}$ is of torsion type $\tau$.  
\end{enumerate}
\item We can choose the $K_{i}$ such that each $K_{i}$ of torsion type some successor ordinal $\alpha_{i}$ has $(K_{i})_{T_{\alpha_{i}}}$ cyclic.
\end{enumerate}
\end{prop}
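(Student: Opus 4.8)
The plan is to realize $G$ as an explicit Cartesian product by re-running the construction of Theorem \ref{Construction} with a decomposition tailored to the three requirements, and then to invoke Theorem \ref{proUlm} to identify this product with $G$. Write the torsion sequence of $G$ as $(N_{\alpha})_{\alpha<\tau}$; by the preceding Proposition each $N_{\alpha}$ is Cartesian, and since $G$ is dual-reduced every non-final term $N_{\alpha}$ is of unbounded exponent. (This is dual to the classical fact that non-final Ulm factors of a reduced $p$-group are unbounded: if the $\alpha$-th factor were bounded while $G^{*(\alpha+1)}\neq 0$, then $p^{N}G^{*(\alpha)}\subseteq p^{\omega}G^{*(\alpha)}=G^{*(\alpha+1)}$ would force $G^{*(\alpha+1)}$ divisible, hence trivial.) The engine of the argument is the elementary splitting already used in the proof of Theorem \ref{Construction}: a Cartesian group of unbounded exponent can be partitioned into any prescribed finite-or-countable family of Cartesian groups each still of unbounded exponent, by partitioning the countable index set of cyclic factors so that each block retains cyclic factors of unbounded order. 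I use throughout that the torsion sequence of a product of the $H_{n}$ is the termwise product of their torsion sequences, so that matching torsion sequences term by term and appealing to Theorem \ref{proUlm} yields the claimed topological isomorphism.

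I would split into cases according to $\tau$ and the top term $N_{\tau-1}$. If $\tau$ is a limit ordinal, every $N_{\alpha}$ is non-final, hence unbounded; I decompose each $N_{\alpha}=\prod_{n}N_{\alpha,n}$ into unbounded Cartesian pieces and build, via Theorem \ref{Construction}, a group $H_{n}$ with torsion sequence $(N_{\alpha,n})_{\alpha<\tau}$. Each $H_{n}$ has torsion type $\tau$, and $G\cong\prod_{n}H_{n}$ gives (1) and (2)(b); requirement (3) is vacuous. If $\tau=\rho+1$ is a successor with $N_{\tau-1}$ infinite, I write the infinite Cartesian group $N_{\tau-1}=\prod_{n}C_{p^{s_{n}}}$ as a product of infinitely many cyclic groups, decompose each lower $N_{\alpha}$ ($\alpha<\rho$) into unbounded pieces $N_{\alpha,n}$, and build $H_{n}$ with torsion sequence $(N_{\alpha,n})_{\alpha<\rho}$ topped by the cyclic group $C_{p^{s_{n}}}$ at level $\rho$. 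Every $H_{n}$ then has torsion type $\tau$ with cyclic top, so $G\cong\prod_{n}H_{n}$ gives (1), (2)(b) and (3) at once.

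The remaining case is $\tau=\rho+1$ a successor with $N_{\tau-1}$ finite, say $N_{\tau-1}=\prod_{j=1}^{m}C_{p^{r_{j}}}$; since this is a finite product, only finitely many factors can carry a nontrivial top term. I would produce $m$ factors $K_{1},\dots,K_{m}$ of torsion type $\tau$, the $j$-th with cyclic top $C_{p^{r_{j}}}$, together with infinitely many factors $K_{m+1},K_{m+2},\dots$ of torsion type exactly $\rho=\tau-1$. For $\alpha<\rho$ each piece $N_{\alpha,i}$ is non-final in every factor and so must be taken unbounded; but when $\rho$ is a successor the top-but-one term $N_{\tau-2}$ plays a double role, being non-final (hence required unbounded) in $K_{1},\dots,K_{m}$ while being the \emph{final} term of each $K_{n}$ ($n>m$) and so, to meet (3), required cyclic. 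This forces a decomposition $N_{\tau-2}=N_{\tau-2,1}\times\cdots\times N_{\tau-2,m}\times\prod_{n>m}C_{p^{t_{n}}}$ in which the first $m$ pieces are unbounded and the remaining pieces are cyclic, which is possible precisely because the unbounded Cartesian group $N_{\tau-2}$ may be split off an $m$-fold unbounded part while leaving infinitely many cyclic summands of unbounded order. Assembling the $K_{i}$ by Theorem \ref{Construction} and checking that the termwise products recover $(N_{\alpha})_{\alpha\leq\tau-1}$, Theorem \ref{proUlm} gives $G\cong\prod_{i}K_{i}$ with all factors of type $\geq\tau-1$ and cyclic tops at successor types, yielding (1), (2)(a) and (3).

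I expect this last case to be the main obstacle. The bookkeeping that simultaneously (i) reconstitutes every $N_{\alpha}$ as a termwise product, (ii) keeps all non-final pieces unbounded so that Theorem \ref{Construction} applies, and (iii) arranges cyclic tops for the successor-type factors, all hinge on the single delicate split of the unbounded group $N_{\tau-2}$ into finitely many unbounded parts plus infinitely many cyclic parts of unbounded order. Once that split is in hand, the remaining verifications reduce to the termwise behaviour of torsion sequences under products and routine appeals to the three cited theorems.
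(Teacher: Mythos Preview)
Your approach is correct and essentially the same as the paper's: decompose each term of the torsion sequence into countably many Cartesian pieces, rebuild factor groups via Theorem~\ref{Construction}, and identify their product with $G$ via Theorem~\ref{proUlm}. The paper carries this out with a single uniform decomposition (each $H_{\alpha,n}$ Cartesian, unbounded whenever $G_{T_{\alpha}}$ is, finite iff $G_{T_{\alpha}}$ is) to obtain (1) and (2), and then remarks that a different decomposition yields (3); you instead run the case analysis more explicitly and arrange (2) and (3) simultaneously, which is a mild strengthening but not a genuinely different argument.
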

This is stronger than the result dual to (\cite{Fuchs}, Proposition 77.5), and, aside from the use of Ulm's theorem in our classification, proves it independently.  
\begin{proof}
We prove the first two parts together.
As $G$ is countably based, we can see that each $G_{T_{\alpha}}$ must be Cartesian.  
Now, we can write each $G_{T_{\alpha}}=\prod_{n\in\mathbb{N}} H_{\alpha,n}$ subject to the following conditions: for each $\alpha,n$
\begin{enumerate}
\item $H_{\alpha,n}$ is Cartesian,
\item $H_{\alpha,n}$ is of unbounded exponent whenever $G_{T_{\alpha}}$ is of unbounded exponent,
\item $H_{\alpha,n}$ is finite if and only if $G_{T_{\alpha}}$ is finite.  
\end{enumerate}
Note that $H_{\alpha+1,n}$ is thus non-trivial only if $H_{\alpha,n}$ is of unbounded exponent.  
By Theorem \ref{Construction}, we can find groups $H_{n}$ with torsion sequence $(H_{\alpha,n})_{\alpha}$.  
Now, by Theorem \ref{proUlm}, we have $G\cong\prod_{n\in\mathbb{N}}H_{n}$.   

We claim that the $H_{n}$ are all non-trivial, and satisfy the conditions on the $K_{i}$ given in the second part of the statement.  
We proceed by induction on $\tau$.  
As $G$ is infinite, $T_{G_{0}}=\bar{t(G)}$ must also be infinite.  
As an infinite Cartesian group, it is by definition a product of infinitely many non-trivial groups, and so in the base case, $\tau=2$, the proposition holds.  

If $\tau>1$, $H_{0,n}$ is of unbounded exponent and so non-trivial, for each $n$, and so each $H_{n}$ must also be of torsion type at least $1$.  
If $\tau$ is a limit ordinal, then, for each $\alpha<\tau$, $G_{T_{\alpha}}$ must be of unbounded exponent, and so each $H_{\alpha,n}$ must be of unbounded exponent.  
Hence each $H_{n}$ must be of torsion type $\tau$.  
If $\tau$ is a successor ordinal and $G_{T_{\tau-1}}$ is finite then all but finitely many $H_{n}$ will be of torsion type $\tau-1$: the remainder will be of type $\tau$.  
Otherwise, if $G_{T_{\tau-1}}$ is infinitely generated, each $H_{\tau-1,n}$ must also be infinitely generated, and so each $H_{n}$ must be of torsion type $\tau$.  
Thus $G\cong \prod_{n\in\mathbb{N}}H_{n}$, and so we can find closed subgroups $K_{i}$ satisfying the first two parts of the statement.

We can decompose the $G_{T_{\alpha}}$ in different ways, and thus get decompositions with different properties.  
In particular, we can always pick $H_{\alpha,n}$ such that each $H_{n}$ of torsion type $\tau_{n}=\varepsilon_{n}+1$ has cyclic $(H_{n})_{T_{\varepsilon_{n}}}$.  
By the same argument as above, this proves the third part, completing the proof.
\end{proof} 

\begin{Corollary}
If $G$ is an infinitely generated, countably based abelian profinite group, $G$ can be decomposed as a product of infinitely many non-trivial groups. 
\end{Corollary}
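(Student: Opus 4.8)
The plan is to reduce the general profinite statement to the dual-reduced pro-$p$ case already settled in Proposition \ref{Prod}. First I would invoke the decomposition of an abelian profinite group as the product $G = \prod_p G_p$ of its Sylow pro-$p$ subgroups. If infinitely many of the $G_p$ are non-trivial, then this product already exhibits $G$ as a product of infinitely many non-trivial groups and there is nothing more to do. So I may assume only finitely many primes occur, say $G \cong G_{p_1} \times \cdots \times G_{p_k}$ with each factor non-trivial. Since a finite product of topologically finitely generated profinite groups is again finitely generated, the hypothesis that $G$ is infinitely generated forces at least one factor $G_{p_j}$ to be infinitely generated; it therefore suffices to split $G_{p_j}$ into infinitely many non-trivial closed subgroups, as the remaining finitely many factors may be adjoined without affecting the count.

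This reduces the problem to an infinitely generated countably based abelian pro-$p$ group $G$. Here I would apply Theorem \ref{Peeling} to write $G = G_0 \times (\mathbb{Z}_p)^{X}$, with $G_0$ a maximal dual-reduced subgroup. If the cardinal $X$ is infinite, the factor $(\mathbb{Z}_p)^{X}$ is itself a product of infinitely many non-trivial copies of $\mathbb{Z}_p$ and the conclusion is immediate. Otherwise $X$ is finite, so $(\mathbb{Z}_p)^{X}$ is finitely generated; since $G$ is infinitely generated, $G_0$ must then be infinitely generated, and in particular infinite.

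At this last step the work of Proposition \ref{Prod} does the job: an infinite countably based dual-reduced abelian pro-$p$ group decomposes as a product $\prod_{i \in I} K_i$ of infinitely many non-trivial closed subgroups. Combining this decomposition of $G_0$ with the (finitely many) free factors coming from $(\mathbb{Z}_p)^{X}$ expresses $G$ as a product of infinitely many non-trivial groups, completing the argument.

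As for obstacles, the substantive content has already been carried by Proposition \ref{Prod}, so this corollary is essentially a matter of assembly. The one point requiring care is the bookkeeping around the phrase ``infinitely generated'': I must confirm both that a finite product of finitely generated profinite groups remains finitely generated, and that $(\mathbb{Z}_p)^{X}$ is finitely generated precisely when $X$ is finite, in order to guarantee that the infinitely generated part always lands in a factor to which Proposition \ref{Prod} genuinely applies.
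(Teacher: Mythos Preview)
Your proposal is correct and follows essentially the same route as the paper: reduce first to a single Sylow pro-$p$ factor via $G=\prod_p G_p$, then peel off the free part via Theorem \ref{Peeling}, and finally invoke Proposition \ref{Prod} on the infinite dual-reduced piece. If anything, your writeup is slightly more careful than the paper's in justifying why the ``infinitely generated'' hypothesis survives each reduction.
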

Note the converse of this is not true: the group $\prod_{p\text{ prime}}C_{p}$ is topologically $1$-generated.  

\begin{proof}
An abelian profinite group $G$ is equal to $\prod_{p\text{ prime}} G_{p}$, where $G_{p}$ is a $p$-Sylow subgroup of $G$.  
If $G$ is infinitely generated, either infinitely many of these are non-trivial, and we are done, or there is some infinitely generated $G_{p}$.  
Consider $G_{p}$, an infinitely generated pro-$p$ group.  
By Theorem \ref{Peeling}, $G_{p}=H\times F$, where $H$ is dual-reduced and $F$ is torsion-free, and so one of these must be infinitely generated.  
As $F$, must be a free abelian pro-$p$ group, isomorphic to $\prod_{i\in I}\mathbb{Z}_{p}$, so a product of infinite procyclic groups.  
If $F$ is infinitely generated, it is therefore the product of infinitely many infinite procyclic groups and our result holds.  
If $H$ is infinitely generated it must be countably based, as $G$ is countably based.  
And now Proposition \ref{Prod}\ gives our result.  
\end{proof}

We now look at which of these groups are isomorphic as abstract groups.

\end{section}

\begin{section}{Strange Isomorphisms}\label{SI}

\begin{Theorem}{Strange Abelian Isomorphisms}\label{SAI}

Let $G$ be a countably based abelian pro-$p$ group.  
If ${t(G)}$, the torsion subgroup of $G$, is of finite exponent then it is a closed subgroup of $G$ and $G$ is of form
$$\prod_{i=1}^{e}(C_{p^{i}})^{\alpha_{i}} \times \mathbb{Z}_{p}^{X}$$
for some $e\in\mathbb{N}$, $(\alpha_{i})$ a sequence in $\mathbb{N}\cup\{0,\aleph_{0}\}$ and $X$ some cardinal.  
Otherwise, $G$ is isomorphic to $\bar{t(G)}$ as an abstract group and $\bar{t(G)}$ is of form
$$\prod _{i\in\mathbb{N}} (C_{p^i})^{\alpha_{i}},$$ for $(\alpha_{i})$ a sequence in $\mathbb{N}\cup\{0,\aleph_{0}\}$ not tending to $0$.  

\end{Theorem}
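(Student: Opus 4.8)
The plan is to treat the two cases separately, using throughout that an abelian pro-$p$ group is a reduced, algebraically compact module over $\Zp$ (compact groups are algebraically compact, and all profinite groups are reduced). For such groups I would lean on the classification of reduced algebraically compact groups (Fuchs, \S40): a reduced algebraically compact $p$-local group is determined up to \emph{abstract} isomorphism by its Ulm--Kaplansky invariants $f_{n}=\dim_{\Fp}(p^{n}A\cap A[p])/(p^{n+1}A\cap A[p])$ together with its torsion-free rank. So in the unbounded case it will suffice to match these invariants for $G$ and $\bar{t(G)}$, while no finer topological data survives.

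First the finite-exponent case. If $t(G)$ has exponent $p^{e}$ then $t(G)=G[p^{e}]$ is the kernel of multiplication by $p^{e}$, hence closed, so $\bar{t(G)}=t(G)$. By Theorem \ref{Peeling} write $G=G_{0}\times\Zp^{X}$ with $G_{0}$ dual-reduced; since $\Zp^{X}$ is torsion-free, $t(G_{0})=t(G)$ still has exponent $p^{e}$ and $T_{1}(G_{0})=\bar{t(G_{0})}=G_{0}[p^{e}]$. I would then observe that $G_{0}/T_{1}(G_{0})$ is torsion-free: if $p^{k}x\in T_{1}(G_{0})=G_{0}[p^{e}]$ then $p^{e+k}x=0$, so $x\in t(G_{0})=T_{1}(G_{0})$ and its image is trivial. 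A dual-reduced group has no non-trivial torsion-free quotient, forcing $G_{0}=\bar{t(G_{0})}$, a bounded Cartesian group, i.e.\ $\prod_{i=1}^{e}(C_{p^{i}})^{\alpha_{i}}$; countable basedness makes each $\alpha_{i}\in\mathbb{N}\cup\{0,\aleph_{0}\}$. This gives the stated form, with $t(G)=\prod_{i=1}^{e}(C_{p^{i}})^{\alpha_{i}}$ closed.

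Now the unbounded case. By the Proposition preceding Theorem \ref{Construction}, $\bar{t(G)}$ is Cartesian, say $\bar{t(G)}=\prod_{i\in\mathbb{N}}(C_{p^{i}})^{\alpha_{i}}$; countable basedness again gives $\alpha_{i}\in\mathbb{N}\cup\{0,\aleph_{0}\}$, and unboundedness of $t(G)$ means infinitely many $\alpha_{i}$ are non-zero, i.e.\ $(\alpha_{i})$ does not tend to $0$. It remains to prove $G\cong\bar{t(G)}$ as abstract groups. The torsion subgroups coincide, $t(G)=t(\bar{t(G)})$, since $t(G)\subseteq\bar{t(G)}\subseteq G$. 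The crucial point is that $\bar{t(G)}$ preserves socle heights: if $s\in G[p]$ and $s=p^{n}g$ in $G$, then $p^{n+1}g=ps=0$, so $g$ is torsion and hence $g\in t(G)\subseteq\bar{t(G)}$; thus the height of each socle element is the same whether computed in $G$ or in $\bar{t(G)}$. Consequently $f_{n}(G)=f_{n}(\bar{t(G)})$ for every $n$, and a direct computation in the product gives $f_{n}(\bar{t(G)})=\alpha_{n+1}$.

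Finally I would match the torsion-free ranks: $\bar{t(G)}$, being an unbounded Cartesian product of cyclic $p$-groups, has torsion-free rank $2^{\aleph_{0}}$, and since $\bar{t(G)}\leq G$ with $|G|=2^{\aleph_{0}}$, the rank of $G$ is squeezed to $2^{\aleph_{0}}$ as well; note this absorbs the $\Zp^{X}$ factor uniformly, with no separate splitting argument needed. With equal Ulm invariants and equal torsion-free rank, the classification of reduced algebraically compact groups yields $G\cong\bar{t(G)}$. The main obstacle is the Ulm-invariant step: one must be certain the socle heights genuinely agree, which is exactly what the short torsion argument above secures, and one must confirm that for these $p$-local algebraically compact groups the pair (Ulm invariants, torsion-free rank) is a \emph{complete} system of abstract invariants, so that the differing torsion sequences of topologically distinct $G$ collapse abstractly onto the single group $\bar{t(G)}$.
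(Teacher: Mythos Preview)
Your approach via the structure theory of algebraically compact groups is quite different from the paper's: the paper argues by transfinite induction on the torsion type, reducing at each successor stage to a cyclic-by-$H$ extension and invoking Lemma~\ref{NHE}, which in turn rests on the ultrafilter construction of Lemma~\ref{NikHom}. Your route would bypass all of that machinery --- but the classification you invoke is misstated, and this is a genuine gap. The pair (Ulm--Kaplansky invariants $f_{n}$, torsion-free rank) is \emph{not} a complete system of invariants for reduced algebraically compact $p$-local groups: $\Zp$ and $\Zp\times\Zp$ are both reduced and algebraically compact, have all $f_{n}=0$, and both have torsion-free rank $\dim_{\mathbb{Q}}\mathbb{Q}_{p}=2^{\aleph_{0}}$, yet they are not abstractly isomorphic since $\dim_{\Fp}(\Zp/p\Zp)=1\neq 2$. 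So your final step, concluding $G\cong\bar{t(G)}$ from equal $f_{n}$ and equal torsion-free rank, does not follow.

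The correct statement (Kaplansky; Fuchs \S40) is that a reduced algebraically compact $\Zp$-module is the $p$-adic completion of a direct sum $\bigoplus_{n\geq 1}(C_{p^{n}})^{(f_{n-1})}\oplus\Zp^{(\beta)}$, and the complete invariants are the $f_{n}$ together with the cardinal $\beta$; one has $\dim_{\Fp}(A/pA)=\sum_{n}f_{n}+\beta$. Your socle-height argument for $f_{n}(G)=f_{n}(\bar{t(G)})$ is correct and nice. To repair the last step you must match $\beta$ rather than the torsion-free rank: since $G$ has unbounded torsion it is not topologically finitely generated, so $G/pG\cong\prod_{\aleph_{0}}\Fp$ as profinite groups and $\dim_{\Fp}(G/pG)=2^{\aleph_{0}}$, while $\sum_{n}f_{n}\leq\aleph_{0}$ by countable-basedness; hence $\beta(G)=2^{\aleph_{0}}$, and the same computation applies to $\bar{t(G)}$. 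With that correction your argument goes through and is arguably cleaner than the paper's, trading the explicit ultrafilter lemma and the induction on torsion type for an appeal to the structure theorem for algebraically compact groups.
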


In fact, it is not hard to see that $\alpha_{i} =\dim_{\mathbb{F}_{p}} (G[p^{i}]/(pG\cap G[p^{i}]))$.  
(This function is the pro-$p$ equivalent of the Ulm invariant function.)  
Specifically, the $\alpha_{i}$ are invariants of the torsion part of $G$, and so do not depend on the topological structure of $G$.

\begin{Corollary}
Let $G, H$ be countably based abelian pro-$p$ groups with unbounded torsion.  
Then $G$ and $H$ are abstractly isomorphic if and only if $t(G)$ is isomorphic to $t(H)$.
\end{Corollary}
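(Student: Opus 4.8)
The plan is to treat the two implications separately, with essentially all the content residing in the ``if'' direction. The forward implication is immediate and purely algebraic: any abstract isomorphism $\phi\colon G\to H$ preserves the order of every element, so $\phi(t(G))=t(H)$ and $\phi$ restricts to an isomorphism $t(G)\cong t(H)$. No topology is needed here, and unbounded torsion plays no role in this direction.

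For the converse, I would appeal directly to Theorem~\ref{SAI}. Since $G$ and $H$ have unbounded torsion, $t(G)$ and $t(H)$ fail to have finite exponent, so the second alternative of Theorem~\ref{SAI} applies: $G$ is abstractly isomorphic to $\bar{t(G)}\cong\prod_{i\in\mathbb{N}}(C_{p^{i}})^{\alpha_{i}(G)}$, and likewise $H\cong\bar{t(H)}\cong\prod_{i\in\mathbb{N}}(C_{p^{i}})^{\alpha_{i}(H)}$, where $\alpha_{i}(G)=\dim_{\mathbb{F}_{p}}\bigl(G[p^{i}]/(pG\cap G[p^{i}])\bigr)$. Since a product $\prod_{i}(C_{p^{i}})^{\alpha_{i}}$ is determined up to abstract isomorphism by the sequence $(\alpha_{i})$, it suffices to prove that $t(G)\cong t(H)$ forces $\alpha_{i}(G)=\alpha_{i}(H)$ for every $i$; the chain $G\cong\bar{t(G)}\cong\bar{t(H)}\cong H$ then finishes the argument.

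The only real obstacle is that the defining formula for $\alpha_{i}$ involves $pG$, the $p$-th powers of the whole group, rather than of its torsion subgroup, so its dependence on $t(G)$ alone is not formally obvious. I would remove this by the observation that a $p$-power root of a torsion element is again torsion: if $x=py\in pG$ satisfies $p^{i}x=0$, then $p^{i+1}y=0$, whence $y\in t(G)$ and $x\in p\,t(G)$. Together with $G[p^{i}]=t(G)[p^{i}]$ this yields $pG\cap G[p^{i}]=p\,t(G)\cap t(G)[p^{i}]$, so that $\alpha_{i}=\dim_{\mathbb{F}_{p}}\bigl(t(G)[p^{i}]/(p\,t(G)\cap t(G)[p^{i}])\bigr)$ is manifestly an invariant of the abstract group $t(G)$. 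Hence $t(G)\cong t(H)$ gives $\alpha_{i}(G)=\alpha_{i}(H)$ for all $i$, completing the proof. This invariance is precisely the content flagged in the remark following Theorem~\ref{SAI}, and it is where I would expect to have to be careful to keep the argument honest.
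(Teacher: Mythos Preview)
Your proof is correct and follows essentially the same route as the paper: invoke Theorem~\ref{SAI} to reduce to a Cartesian product indexed by the $\alpha_{i}$, then observe that the $\alpha_{i}$ are determined by $t(G)$ alone. You add the explicit verification $pG\cap G[p^{i}]=p\,t(G)\cap t(G)[p^{i}]$ that the paper leaves to the reader in the remark after Theorem~\ref{SAI}, and you spell out the (trivial) forward implication; otherwise the arguments coincide.
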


\begin{proof}
As $G$ has unbounded torsion, by Theorem \ref{SAI}, it is abstractly isomorphic to $\prod C_{p^{\alpha_{i}}}$, for some sequence $(\alpha_{i})$.  
The values of the $\alpha_{i}$ depend only on the abstract isomorphism class of $t(G)$, the abstract torsion part of $G$.  
Hence the abstract isomorphism class of $G$ is totally determined by the abstract isomorphism class of $t(G)$, which also determines the abstract isomorphism class of $\bar t(G)$.  
\end{proof}

If two abelian countably based pro-$p$ groups have isomorphic torsion subgroups, they are abstractly isomorphic if they have unbounded torsion.  
Otherwise, they are abstractly isomorphic if and only if they are isomorphic as topological groups.

To prove Theorem \ref{SAI} , we first prove the following crucial lemma:

\begin{lem}\label{NikHom}

Let $(\alpha_{n})_{n\in\mathbb{N}}$ be a sequence in $\mathbb{N}\cup\{0,\aleph_{0}\}$ not tending to 0.  

Then 
$$ \prod _{i\in\mathbb{N}} (\mathbb{Z}/{p^i}\mathbb{Z})^{\alpha_{i}}\cong \prod _{i\in\mathbb{N}} (\mathbb{Z}/{p^i}\mathbb{Z})^{\alpha_{i}}\times\prod_{I} \mathbb{Z}_p $$
as abstract groups, for arbitrary countable index set $I$.

\end{lem}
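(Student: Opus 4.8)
The plan is to isolate a single ``absorption'' phenomenon and then amplify it across a countable product. Write $A$ for the group $\prod_{i\in\mathbb{N}}(\mathbb{Z}/p^{i}\mathbb{Z})^{\alpha_{i}}$. Since $(\alpha_{i})$ does not tend to $0$, the set $\{i:\alpha_{i}\neq 0\}$ is infinite, so $A$ has cyclic direct factors of unbounded order. The heart of the argument is the claim: if $C=\prod_{k}\mathbb{Z}/p^{n_{k}}\mathbb{Z}$ with $n_{k}$ strictly increasing to infinity, then $C\cong \Zp\times C$ as abstract groups.

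To prove the claim I would invoke the standard $\varprojlim$--$\varprojlim^{1}$ exact sequence for the tower $(\mathbb{Z}/p^{n_{k}}\mathbb{Z})_{k}$ under the reduction maps. Explicitly, the difference homomorphism $d:C\to C$, $d\bigl((x_{k})_{k}\bigr)=(x_{k}-\bar{x_{k+1}})_{k}$ (where $\bar{x_{k+1}}$ is the image of $x_{k+1}$ in $\mathbb{Z}/p^{n_{k}}\mathbb{Z}$), has kernel the coherent sequences, namely $\varprojlim_{k}\mathbb{Z}/p^{n_{k}}\mathbb{Z}\cong\Zp$, and is surjective because the transition maps are surjective, so $\varprojlim^{1}$ vanishes. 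This gives a short exact sequence $0\to\Zp\to C\xrightarrow{d}C\to 0$. I would then check that the kernel copy $K\cong\Zp$ (the diagonal $a\mapsto(a\bmod p^{n_{k}})_{k}$) is a \emph{pure} subgroup of $C$: if $\kappa(a)\in p^{m}C$, then reading off the coordinates $k$ with $n_{k}\geq m$ forces $p^{m}\mid a$ in $\Zp$, whence $\kappa(a)\in p^{m}K$. Since $\Zp$ is compact it is algebraically compact, hence pure-injective, so this pure-exact sequence splits. Therefore $C\cong K\oplus(C/K)\cong\Zp\oplus C$, which is the claim.

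With the claim in hand the assembly is formal. Partition the cyclic direct factors of $A$ into $\aleph_{0}$ blocks so that $A\cong\prod_{j\in\mathbb{N}}A_{j}$, where each $A_{j}$ is a product of cyclic $p$-groups of unbounded order; this is possible precisely because $\{i:\alpha_{i}\neq 0\}$ is infinite, so each block can be allotted a cofinal subsequence of orders. Each $A_{j}$ then has a direct factor of the form $C$ above, so the claim yields $A_{j}\cong\Zp\times A_{j}$. Distributing the product,
$$A\times\prod_{j\in\mathbb{N}}\Zp\;\cong\;\prod_{j\in\mathbb{N}}(A_{j}\times\Zp)\;\cong\;\prod_{j\in\mathbb{N}}A_{j}\;\cong\;A,$$
which gives the lemma for a countably infinite index set, the finite case being the same with finitely many blocks (or a special case).

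The main obstacle is the claim, and within it the splitting step: producing an honest abstract complement to the absorbed $\Zp$. The slick point is that the $\varprojlim^{1}$ sequence already presents a quotient isomorphic to $C$, so the only genuine work is verifying purity of the diagonal $\Zp$ and then appealing to algebraic compactness (pure-injectivity) of $\Zp$ to split. It is worth stressing that the hypothesis that $(\alpha_{i})$ does not tend to $0$ is used exactly here: it forces the orders $n_{k}$ to be unbounded, so that $\varprojlim\mathbb{Z}/p^{n_{k}}\mathbb{Z}$ is the full $\Zp$ rather than a finite group and $\varprojlim^{1}$ vanishes, and it is also what permits the splitting of $A$ into countably many blocks each still of unbounded order.
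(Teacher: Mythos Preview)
Your argument is correct. The overall architecture matches the paper's: isolate a diagonal copy of $\Zp$ inside a product $C=\prod_{k}\mathbb{Z}/p^{n_{k}}\mathbb{Z}$ of cyclic groups of unbounded order, show $C/\Zp\cong C$ via the shift--difference map, deduce $C\cong\Zp\times C$, and then partition into countably many such blocks. Your difference map $d$ is exactly the paper's endomorphism $\theta$, and your diagonal $K$ is the paper's $\bar{\langle\eta\rangle}$.

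The genuine divergence is in how the splitting is obtained. The paper produces an \emph{explicit} retraction onto the diagonal by choosing a non-principal ultrafilter $\mathcal{U}$ on $\mathbb{N}$ and taking the ultralimit map $\psi^{(\mathcal{U})}:C\to\Zp$; the complement is then $\ker\psi^{(\mathcal{U})}$. You instead check that the diagonal $\Zp$ is pure in $C$ and invoke the algebraic compactness (pure-injectivity) of $\Zp$ to split the pure-exact sequence $0\to\Zp\to C\to C\to 0$ abstractly. Your route is cleaner and more conceptual, packaging the construction as the standard $\varprojlim$/$\varprojlim^{1}$ four-term sequence, and it avoids the explicit appeal to the Boolean Prime Ideal Theorem; the paper's route, on the other hand, actually names a complement and incidentally yields a ring homomorphism $C\to\Zp$, which is of independent interest though not needed for the lemma.
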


We do this by using a non-principal ultrafilter $\mathcal{U}$ on $\mathbb{N}$ to define a series of homomorphisms in a similiar way to an ultralimit. 

\begin{Definition} 
An \emph{ultrafilter} $\mathcal{U}$ on a set $X$ is a subset of the power set of  $X$ such that the function defined by $m(A)=1$ if $A\in\mathcal{U}$, $m(A)=0$ otherwise, is a finitely additive measure.  
It is said to be \emph{principal} if it has some least element, $\{x\}\subseteq X$ and \emph{non-principal} otherwise.  
\end{Definition}

The existence of non-principal ultrafilters on infinite sets is equivalent to the Boolean Prime Ideal Theorem.  

\begin{proof}

Without loss of generality, we can assume each $\alpha_{i} \in \{0,1\}$, by considering a direct factor of a partial product.  
Define a set of homomorphisms $\phi^{i}_{j}: (\mathbb{Z}/{p^i}\mathbb{Z})^{\alpha_i}\to \mathbb{Z}/{p^j}\mathbb{Z}$ by 
$$\phi_{j}^{i}(x) \equiv x \pmod {p^j}\textup{ for }j\leq i,\textup{ and }x\phi^{i}_{j}=0\textup{ for }j>i.$$  

For $(x_1, x_2, \ldots)\in  \prod_{i\in \mathbb{N}} (\mathbb{Z}/{p^i}\mathbb{Z})^{\alpha_i}$, set
$$I^{n}_{j}((x_1, x_2, \ldots))= \{m\in \mathbb{N}\mid x_m\phi^{i}_j = n\}$$
giving, for each $j\in\mathbb{N}$, a function from our Cartesian group to the power set of $\mathbb{N}$.  

We define a new map by setting $(x_1, x_2, \ldots) \psi^{(\mathcal{U})}_{j}$ to be the unique $n\in\mathbb{N}$ such that $I^{n}_{j}((x_1, x_2, \ldots))$ is in $\mathcal{U}$.

Now, for $x,y\in \prod_{i\in \mathbb{N}} (\mathbb{Z}/{p^i}\mathbb{Z})^{\alpha_i}$, we have
$$I_{j}^{x\psi^{\mathcal{U}}_{j}}(x)\cap I_{j}^{y\psi^{\mathcal{U}}_{j}}(y) \subseteq
I_{j}^{x\psi^{\mathcal{U}}_{j}+y\psi^{\mathcal{U}}_{j}}(x+y).$$
The left hand side is the intersection of two elements of $\mathcal{U}$, and so is in $\mathcal{U}$, as it is an ultrafilter.  
Thus the right hand side contains an element of $\mathcal{U}$, and so is also in $\mathcal{U}$, and so we have
$$(x+y)\psi_{j}^{\mathcal{U}}=x\psi_{j}^{\mathcal{U}}+y\psi_{j}^{\mathcal{U}}.$$

So $\psi^{(\mathcal{U})}_{j}: \prod_{i\in \mathbb{N}} (\mathbb{Z}/{p^i}\mathbb{Z})^{\alpha_i} \to\mathbb{Z}/{p^j}\mathbb{Z} $ is a homomorphism.  
Now, the $\mathbb{Z}/p^{j}\mathbb{Z}$ (with maps $\mu_{mn}:\mathbb{Z}/p^{m}\mathbb{Z}\to\mathbb{Z}/p^{n}\mathbb{Z}$ given by 
$$(a+p^{m}\mathbb{Z})\mu_{mn}= a+p^{n}\mathbb{Z}\textup{, for each }n\leq m$$
 form a surjective inverse system.  
As $\phi^{i}_{j}$ and $\mu_{mn}$ commute (for each $i,j,m,n$) so do $\psi^{(\mathcal{U})}_{j}$ and $\mu_{mn}$.  
Now, we have surjective maps from $\prod_{i\in \mathbb{N}} (\mathbb{Z}/{p^i}\mathbb{Z})^{\alpha_i}$ to each term of the surjective inverse system, and all terms commute.  
By the universal property, these pull back to a unique surjective homomorphism, call it $\psi^{(\mathcal{U})}$, to the inverse limit.  
This map,
$$ \psi^{(\mathcal{U})}: \prod_{i\in \mathbb{N}} (\mathbb{Z}/{p^i}\mathbb{Z})^{\alpha_i} \to \mathbb{Z}_p$$
is given by
$$a\mapsto \lim_{j\in\mathbb{N}} ((a )\psi^{(\mathcal{U})}_{j})$$
(where we refer to the limit under the $p$-adic norm).  
Now $\mathbb{Z}_{p}$ is a torsion-free group, but $\prod_{i\in \mathbb{N}} (\mathbb{Z}/{p^i}\mathbb{Z})^{\alpha_i}$is dual-reduced, so $\psi^{(\mathcal{U})}$ cannot be continuous, and so has non-closed kernel.  
Consider the diagonal element $\eta=(x_{i})$, $x_{i}=\alpha_{i} \in \{0,1\}$.  
It is straightforward to see that 
$$\bar{\langle \eta\rangle}=\{(x_{i}) : x_m\mu_{m,n} = x_n, \forall m,n\in\mathbb{N}, \alpha_{m},\alpha_{n}\neq 0\}$$
and so $\bar{\langle \eta\rangle}\psi^{(\mathcal{U})}=\mathbb{Z}_p$.  
Now, $\ker\psi$ is a complement to $\bar{\langle\eta\rangle}$, so we have 
$$\prod_{i\in\mathbb{N}}(\mathbb{Z}/{p^i}\mathbb{Z})^{\alpha_i}=\text{ker}\psi^{(\mathcal{U})}\ltimes \bar{\langle\eta\rangle}= \text{ker}\psi^{(\mathcal{U})} \times \bar{\langle\eta\rangle}$$
as we are working with abelian groups. 
It follows that 
$$\ker\psi^{(\mathcal{U})}\cong \prod_{i\in \mathbb{N}} (\mathbb{Z}/{p^i}\mathbb{Z})^{\alpha_i} \slash \bar{\langle\eta\rangle} $$
 as abstract groups. 
But consider the endomorphism $\theta$ of $\prod_{i\in \mathbb{N}} (\mathbb{Z}/{p^i}\mathbb{Z})^{\alpha_i}$ defined by 
$$(x_{i})_{i\in\mathbb{N}}\theta= (x_{i}-x_{i+t_{i}}\mu_{i+t_i,i})_{i\in\mathbb{N}}$$
where $t_{i}$ is chosen to be minimal positive number such that $\alpha_{i+t_{i}} =1$ if $\alpha_{i}=1$, and to be $0$ when $\alpha_{i}=0$.  
It is surjective and has kernel precisely $\bar{\langle \eta\rangle}$, and so our quotient is topologically isomorphic to $\prod_{i\in \mathbb{N}} (\mathbb{Z}/{p^i}\mathbb{Z})^{\alpha_i}$.  
We have, for any sequence $(\alpha_i)$ in $\mathbb{N}\cup\{0,\aleph_0\}$ with infinitely many non-zero terms
$$\prod_{i\in\mathbb{N}}(\mathbb{Z}/{p^i}\mathbb{Z})^{\alpha_{i}}\cong \prod_{i\in\mathbb{N}}(\mathbb{Z}/{p^i}\mathbb{Z})^{\alpha_{i}} \times \mathbb{Z}_p$$
as abstract groups.  

We can write $\prod _{i\in\mathbb{N}} (\mathbb{Z}/{p^i}\mathbb{Z})^{\alpha_{i}}$ as 
$$\prod_{i\in I}\prod _{j\in\mathbb{N}} (\mathbb{Z}/{p^j}\mathbb{Z})^{\alpha^{(i)}_{j}}$$, for index set $I$ of arbitrary countable cardinality, such that each $(\alpha^{(i)}_{j})$ is a sequence in $\mathbb{N}\cup\{\aleph_{0}\}$ with infinitely many non-zero entries.  
Now, taking the Cartesian product of groups on the left and right hand sides of the above isomorphism completes the proof.  
\end{proof}

Note that $\psi^{(\mathcal{U})}$ above is in fact a ring homomorphism, as each homomorphism described above can be shown with little work to be a ring homomorphism.  
The direct product decomposition described above is not a decomposition as rings, as $\bar{\langle\eta\rangle}$ is not an ideal.

Note that a group can be written as $\prod _{i\in\mathbb{N}} C_{p^{\alpha_{i}}}$ for $(\alpha_{n})_{n\in\mathbb{N}}$ an unbounded sequence in $\mathbb{N}$ if and only if it can be written as $\prod _{i\in\mathbb{N}} (C_{p^i})^{\beta_{i}}$ for $(\beta_{i})$ some sequence in $\mathbb{N}\cup\{0,\aleph_{0}\}$ not tending to $0$.

Lemma \ref{NikHom} is not hard to strengthen.

\begin{Corollary}\label{NHI}
Let $(\alpha_{n})_{n\in\mathbb{N}}$ be an unbounded sequence in $\mathbb{N}$.  

Write $G= \prod _{i\in\mathbb{N}} C_{p^{\alpha_{i}}}$.  

If $y\in G\setminus (pG+t(G))$ then 
$$ G=K\times\bar{\langle y\rangle}$$ 
where $K\cong G$ (as abstract groups).
\end{Corollary}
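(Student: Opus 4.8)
The plan is to reduce the statement to Lemma \ref{NikHom} by carrying $y$ onto a standard diagonal element through an explicit topological automorphism of $G$; this avoids re-running the ultrafilter machinery. Write $y=(y_i)_{i\in\mathbb{N}}$ with $y_i\in C_{p^{\alpha_i}}$, and let $S=\{i\in\mathbb{N}: y_i\not\equiv 0\pmod p\}$ be the set of coordinates in which $y_i$ is a unit, equivalently a generator. First I would record the meaning of the hypothesis: if $y=pg+z$ with $z$ of bounded order, then at each $i\in S$ the coordinate $z_i$ must again be a unit, hence of order $p^{\alpha_i}$, so $z$ has bounded order only if $\{\alpha_i:i\in S\}$ is bounded. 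Thus $y\in pG+t(G)$ if and only if $\{\alpha_i:i\in S\}$ is bounded, and the hypothesis $y\notin pG+t(G)$ is exactly the statement that $\{\alpha_i:i\in S\}$ is unbounded. Since $y\notin t(G)$ has infinite order, $\bar{\langle y\rangle}$ is a closed procyclic subgroup isomorphic to $\mathbb{Z}_p$.

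Next I would normalise $y$ in two steps. Write $y_i=p^{e_i}u_i$ with $u_i$ a unit (and $e_i=\alpha_i$, the coordinate $0$, when $y_i=0$); note $e_i=0$ precisely for $i\in S$. The coordinatewise automorphism $\Phi_0=\prod_i(\text{multiplication by }u_i^{-1})$, taken to be the identity where $y_i=0$, replaces each $y_i$ by the standard element $p^{e_i}$. I then remove the contributions of $y$ in the coordinates $i\notin S$: for each such $i$ with $y_i\neq 0$ choose, using unboundedness of $\{\alpha_j:j\in S\}$, an index $j(i)\in S$ with $\alpha_{j(i)}\geq\alpha_i$, and apply the transvection
$$\Psi(x)_i=x_i-p^{e_i}(x_{j(i)}\bmod p^{\alpha_i})\quad(i\notin S,\ y_i\neq 0),\qquad \Psi(x)_i=x_i\ \text{otherwise}.$$
Because the source indices $j(i)$ lie in $S$ while the modified indices lie in its complement, $\Psi$ is a topological automorphism (each output coordinate depends on at most two input coordinates, and $x_i\mapsto x_i+p^{e_i}(x_{j(i)}\bmod p^{\alpha_i})$ is its inverse), and $\Psi$ fixes every $S$-coordinate. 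After $\Theta:=\Psi\circ\Phi_0$ the element $y$ has been carried to $\eta_S$, the element equal to the generator $1$ at each $i\in S$ and to $0$ elsewhere.

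Finally I would split $G=G_S\times G_{S^c}$ with $G_S=\prod_{i\in S}C_{p^{\alpha_i}}$ and apply Lemma \ref{NikHom} to the pair $(G_S,\eta_S)$: here $(\alpha_i)_{i\in S}$ is unbounded, hence does not tend to $0$, and $\eta_S$ is exactly the diagonal element to which the construction in the proof of that lemma applies, producing a complement $K_S$ with $G_S=K_S\times\bar{\langle\eta_S\rangle}$ and $K_S\cong G_S$. Then
$$G=(K_S\times G_{S^c})\times\bar{\langle\eta_S\rangle},\qquad K_S\times G_{S^c}\cong G_S\times G_{S^c}=G,$$
and pulling this decomposition back through $\Theta^{-1}$ gives $G=K\times\bar{\langle y\rangle}$ with $K=\Theta^{-1}(K_S\times G_{S^c})\cong G$, as required.

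I expect the transvection step to be the main obstacle: a single auxiliary coordinate need not have large enough exponent to cancel every $i\notin S$ with $y_i\neq0$, so one must choose the sources $j(i)\in S$ coordinate by coordinate (which is precisely where unboundedness of $\{\alpha_i:i\in S\}$, i.e.\ the hypothesis $y\notin pG+t(G)$, is used) and then verify that the resulting infinite family of transvections still assembles into a genuine topological automorphism. The remaining points, namely translating the hypothesis into unboundedness of $\{\alpha_i:i\in S\}$ and identifying $\eta_S$ with the diagonal element of Lemma \ref{NikHom}, are routine.
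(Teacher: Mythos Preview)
Your proposal is correct and follows the same strategy as the paper: isolate the set $S=N_y$ of coordinates where $y_i$ generates, invoke the proof of Lemma~\ref{NikHom} on $G_S$ to complement the diagonal, and absorb $G_{S^c}$ into that complement. The only difference is bookkeeping: the paper handles the off-$S$ coordinates by projecting onto $G_{N_y}$ and checking directly that $K_y\pi_{N_y}^{-1}\cap\bar{\langle y\rangle}=\{0\}$, whereas you first straighten $y$ into $\eta_S$ via an explicit topological automorphism (unit rescalings plus transvections) and then pull the decomposition back through $\Theta^{-1}$---a cosmetic repackaging of the same idea.
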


\begin{proof}
Write $[x]_{i}$ for the sequence with $i$th entry $x$ and every other entry trivial.

Write $y=(y_{i})_{i\in \mathbb{N}}$.  
Each $y_{i}$ is either a generator for $C_{p^{\alpha_i}}$ or a non-generator.  
As $y \in G\setminus (pG+t(G))$ the set $N_{y}=\{i\in\mathbb{N} : \langle y_{i}\rangle = C_{p^{\alpha_i}}\}$ has no upper bound.  
This is because $z=(z_{i})$ is in $t(G)$ if and only if the set $\{o(z_{i}) : i\in\mathbb{N}\}$ is bounded and because $p$th powers in cyclic groups are not generators.

Suppose every $y_i$ is a generator.  
Then we can construct a continuous isomorphism $\theta :G \to  \prod _{i\in\mathbb{N}}\mathbb{Z}/{p^{\alpha_{i}}}\mathbb{Z}$ by sending each $[y_i]_{i}$ to $[1]_{i}$ and extending.  
Then Lemma \ref{NikHom} applied to $G\theta$ shows that $G=K\times\bar{\langle y\rangle}$, where $K$ is the pre-image in $G$ of the non-closed subgroup $\ker\psi^{(\mathcal{U})}$.  
(We use the notation given previously: $\psi^{(\mathcal{U})}$ is the non-continuous homomorphism defined in the proof of Lemma \ref{NikHom}.)   

Conversely, if $y$ has any non-generator entries, we consider the canonical projection $\pi_{N_{y}}: G \to \prod_{i\in N_{y}}C_{p^{\alpha_{i}}}$.  
Every entry of $y\pi_{N_{y}}$ is a generator, and we are in the situation described above.  

Hence $\prod_{i\in N_{y}}C_{p^{\alpha_{i}}} = K_{y}\times \bar{\langle y\pi_{N_{y}}\rangle}$, with $\prod_{i\in N_{y}}C_{p^{\alpha_{i}}}\cong K_{y}$, as abstract groups.  
We have 
$$K_{y}\pi_{N_{y}}^{-1}\cong  \prod_{i\in N_{y}}C_{p^{\alpha_{i}}}\times \prod_{i\in\mathbb{N}\setminus N_{y}} C_{p^{\alpha_{i}}}.$$  
Now, to show $K_{y}\pi_{N_{y}}^{-1}\times \bar{\langle y \rangle} =G$, it is sufficient to show that 
$$K_{y}\pi_{N_{y}}^{-1}\cap\bar{\langle y \rangle}=\{0\}.$$  

Suppose $\beta y\in K_{y}\pi_{N_{y}}^{-1}\cap\bar{\langle y \rangle}$.  
Then, for each $n\in N_{y}$, the $n$th entry of $\beta y$ must be trivial, and hence $\beta\equiv 0$ modulo $p^{\alpha_{n}}$.  
But $N_{y}$ is infinite and so $\beta=0$.
\end{proof}

We need another strengthening of this result to prove Theorem \ref{SAI}.

\begin{lem}\label{NHE}

Let $\alpha_{i}$ be an unbounded sequence in $\mathbb{N}$, and $H\cong\prod _{i\in\mathbb{N}} C_{p^{\alpha_{i}}}$.  
Let $A$ be an abstract abelian group containing $H$ with $t(A)\subseteq H$, and $A/H\cong C_{p^{k}}$, for some $k\in\mathbb{N}$.  
Then $A$ is isomorphic to $H$ as an abstract group.
\end{lem}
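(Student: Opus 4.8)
The plan is to induct on $k$, reducing to the case $A/H\cong C_p$, which we handle by splitting off a single $\mathbb{Z}_p$-summand via Corollary \ref{NHI}. Fix $a\in A$ whose image generates $A/H\cong C_{p^k}$, so that $A=H+\mathbb{Z}a$ and $z:=p^{k}a\in H$. First note $a$ has infinite order: otherwise $a\in t(A)\subseteq H$, contradicting $a\notin H$.

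For the inductive step with $k\geq 2$, set $A'=H+\mathbb{Z}(pa)$. Since the image of $a$ has order $p^{k}$ in $A/H$, the image of $pa$ has order $p^{k-1}$, so $A'/H\cong C_{p^{k-1}}$, while $t(A')\subseteq t(A)\subseteq H\subseteq A'$. By the inductive hypothesis $A'\cong H$ as abstract groups, so $A'$ is again a Cartesian product of cyclic $p$-groups of unbounded exponent. Moreover $A/A'\cong (A/H)/(A'/H)\cong C_p$ and $t(A)\subseteq H\subseteq A'$, so the pair $(A,A')$ satisfies the hypotheses of the lemma with $k=1$. Thus it suffices to prove the case $k=1$.

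So assume $A/H\cong C_p$ and $pa=z\in H$. The key point is that $z\notin pH+t(H)$: were $z=pw+s$ with $w\in H$ and $s\in t(H)$, then $a'=a-w\notin H$ would satisfy $pa'=s$, whence $p^{m+1}a'=p^{m}s=0$ for suitable $m$, forcing $a'\in t(A)\subseteq H$, a contradiction. Hence Corollary \ref{NHI} applies to $z$, giving $H=K\times\bar{\langle z\rangle}$ with $K\cong H$ abstractly and $\bar{\langle z\rangle}\cong\mathbb{Z}_p$, where $z$ is a topological generator. I would then check that $A=K\times L$ is an internal direct product, where $L=\bar{\langle z\rangle}+\mathbb{Z}a$: the sum $K+L$ is all of $A$ since $H=K+\bar{\langle z\rangle}$ and $a\in A$, while $K\cap L=0$ because any $w+na\in K\subseteq H$ with $w\in\bar{\langle z\rangle}$ has $p\mid n$ (as $a+H$ has order $p$), so $na=qz\in\bar{\langle z\rangle}$ and the element lies in $K\cap\bar{\langle z\rangle}=0$.

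It remains to identify $L$. Since $t(L)\subseteq H\cap L=\bar{\langle z\rangle}$ and the latter is torsion-free, $L$ is torsion-free of rank one with $L/\bar{\langle z\rangle}\cong C_p$. Identifying $\bar{\langle z\rangle}$ with $\mathbb{Z}_p$ so that $z$ corresponds to $1$, the relation $pa=z$ realises $L$ as $(\mathbb{Z}_p\oplus\mathbb{Z})/\langle(z,-p)\rangle$, and the homomorphism sending $\mathbb{Z}_p$ identically into $\mathbb{Q}_p$ and $a\mapsto 1/p$ is injective with image $\tfrac1p\mathbb{Z}_p\cong\mathbb{Z}_p$; hence $L\cong\mathbb{Z}_p$. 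Combining, $A\cong H\times\mathbb{Z}_p$, and Lemma \ref{NikHom} (with $I$ a single point) gives $H\times\mathbb{Z}_p\cong H$, completing the proof. The main obstacle is the observation that $z\notin pH+t(H)$, as this is exactly what licenses the application of Corollary \ref{NHI}; the remaining steps are routine bookkeeping with internal direct products and a rank-one torsion-free identification.
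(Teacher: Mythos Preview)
Your proof is correct and uses the same core idea as the paper: show that the relevant element of $H$ lies outside $pH+t(H)$, invoke Corollary~\ref{NHI} to split off a copy of $\mathbb{Z}_p$, and identify the extension of that $\mathbb{Z}_p$ by the new generator as again $\mathbb{Z}_p$.

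The one difference is that your induction on $k$ is unnecessary. The paper handles arbitrary $k$ in a single step: with $y=p^{k}x$, the argument that $y\notin pH+t(A)$ goes through verbatim (if $y=pz+t$ then $p^{k-1}x-z$ is torsion, hence in $H$, contradicting that $x+H$ has order $p^{k}$), and then $D=\overline{\langle y\rangle}+\mathbb{Z}x$ is identified with $\mathbb{Z}_p$ directly, since adjoining a $p^{k}$-th root of a topological generator of $\mathbb{Z}_p$ again yields $\mathbb{Z}_p$. Your reduction to $k=1$ simply replaces this one-line observation by an inductive wrapper; the rest of your argument (the verification that $A=K\times L$ and that $L\cong\mathbb{Z}_p$) is exactly what the paper does, written out in somewhat more detail.
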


\begin{proof}
Choose any $x \in A$ such that $\langle x\rangle + H= A$: we have ${p^k}x=y \in H$.  
Note that $y\in H\setminus (t(A)+{p}H)$.  
Otherwise, $y={p}z+t$ for some torsion $t$ and $z\in H$.  
But then ${p^{k-1}}x-z$ is torsion, and so in $H$, as $p({p^{k-1}}x - z)=t$ is torsion.   
Hence $x$ has order at most $p^{k-1}$ in $A/H$ and so $H$ is of index at most $p^{k-1}$ in $A$, which is a contradiction.  

We can consider $H$ to be a pro-$p$ group under the product topology.  
Write $C$ to denote the subgroup of $H$ topologically generated by $y$, and $D$ to be the subgroup topologically generated by $x$.  
From the previous result, Corollary \ref{NHI}, we have that $H= K \times C$ where $K$ is isomorphic to $H$ as an abstract group.  
But, we can see that $D=\langle x, C \;\vert\; {p^{k}}x=y\rangle \cong C\cong \mathbb{Z}_{p}$ trivially.  
Now, we can see that 
$$A=\langle x \rangle + H= \langle H, x\rangle= \langle K\times C,  x\rangle= K\times D.$$  
Now, by Corollary \ref{NHI}, $A$ is isomorphic to $H$ as an abstract group.  
\end{proof}

\begin{proof}(of Theorem \ref{SAI})
By Theorem \ref{proUlm}, $G$ is topologically isomorphic to a group constructed as in the proof of Theorem \ref{Construction}.  
We now proceed by induction on $\tau$, the torsion type of $G$. 
The base case, $\tau=2$ holds as the closure of the torsion subgroup of an abelian pro-$p$ group must be Cartesian.   

If $\tau$ is a limit ordinal, then the proof of Theorem \ref{Construction} shows that $G$ is the product of groups of lower torsion types.  
By the inductive hypothesis, each of these is isomorphic to the closure of its torsion subgroup and so $G$ must be isomorphic to $\bar{t(G)}$.

If $\tau$ is a successor ordinal, by the argument of Proposition \ref{Prod}, we can write 
$$G=\prod_{i\in I} K_{i},$$
 where $I$ is not necessarily infinite, such that each $K_{i}$ has torsion type $\tau$, and each $(K_{i})_{T_{\tau-1}}$ cyclic.  
Hence, without loss of generality, we can assume that $G_{T_{\tau-1}}$ is a cyclic group, of order $p^{k}$ for some $k\in\mathbb{N}$.  
But then $T_{\tau-1}(G)$ is a subgroup of $G$, which, by definition contains $t(G)$, and  $G/T_{\tau-1}(G)\cong C_{p^{k}}$.  
But by the inductive hypothesis $T_{\tau-1}(G)$ is abstractly isomorphic to $\bar{t(G)}$, a Cartesian group of unbounded exponent.  
Now Lemma \ref{NHE} gives the result.  

This classifies all dual-reduced pro-$p$ groups and so, with Theorem \ref{Peeling} and Lemma \ref{NikHom}, we are done.\end{proof}
\end{section}

\begin{section}{Embeddings}\label{Universality}

The statement of Theorem \ref{SAI} leads one to ask if every countably-based abelian pro-$p$ group with unbounded torsion is isomorphic (as a profinite group) to a closed subgroup of the closure of its torsion subgroup.  
In fact, a much stronger result is true.
\begin{Theorem}
Let $A, B$ be countably-based abelian pro-$p$ groups.  
If $t(B)$ is of unbounded exponent, then $A$ is (topologically) isomorphic to a closed subgroup of $\bar{t(B)}$.   
\end{Theorem}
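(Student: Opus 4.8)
The plan is to pass to Pontryagin duals, where the statement becomes a transparent algebraic fact about discrete countable abelian $p$-groups. Recall that for a profinite abelian group $G$ with discrete dual $G^{*}$, restriction of characters gives an order-reversing correspondence between the closed subgroups of $G$ and the subgroups of $G^{*}$: if $H\leq_{C} G$ then the restriction map $G^{*}\to H^{*}$ is surjective with kernel the annihilator $H^{\perp}$, so $H^{*}\cong G^{*}/H^{\perp}$ exhibits $H^{*}$ as a quotient of $G^{*}$. Conversely, any surjection $G^{*}\twoheadrightarrow Q$ of discrete groups dualizes to a continuous injection $Q^{*}\hookrightarrow G^{**}=G$ whose image, being the continuous image of a compact group in a Hausdorff group, is a closed subgroup. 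Thus $A$ is topologically isomorphic to a closed subgroup of $\bar{t(B)}$ if and only if $A^{*}$ is a discrete quotient of $(\bar{t(B)})^{*}$.

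Next I would identify the two duals. Since $A$ is countably based, $A^{*}$ is a countable abelian $p$-group; the argument below will handle any such group. Since $B$ is countably based, the Proposition preceding Theorem \ref{Construction} shows $\bar{t(B)}$ is Cartesian, and the hypothesis that $t(B)$ has unbounded exponent means $\bar{t(B)}\cong\prod_{i\in\mathbb{N}} C_{p^{\alpha_{i}}}$ with $(\alpha_{i})$ unbounded. Dualizing the Cartesian product (\cite{RZ}, Theorem 2.9.4) gives $(\bar{t(B)})^{*}\cong\bigoplus_{i\in\mathbb{N}} C_{p^{\alpha_{i}}}$, a countable direct sum of cyclic $p$-groups whose orders are unbounded. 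So the theorem reduces to the following purely algebraic claim: every countable abelian $p$-group $M$ is a homomorphic image of $\bigoplus_{i\in\mathbb{N}} C_{p^{\alpha_{i}}}$ whenever $(\alpha_{i})$ is unbounded.

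To prove this reduction, choose an (at most) countable generating set $\{m_{j}\}$ for $M$, and let $p^{o_{j}}$ be the order of $m_{j}$. Because $(\alpha_{i})$ is unbounded, for each $j$ there are infinitely many indices $i$ with $\alpha_{i}\geq o_{j}$, so I can pick pairwise distinct indices $i_{j}$ with $\alpha_{i_{j}}\geq o_{j}$. Sending a generator of the $i_{j}$-th cyclic summand to $m_{j}$ is a well-defined homomorphism $C_{p^{\alpha_{i_{j}}}}\to M$ — the order condition $o_{j}\leq\alpha_{i_{j}}$ is exactly what is required — and, sending the generators of all remaining summands to $0$, the universal property of the direct sum assembles these into a single homomorphism $\bigoplus_{i} C_{p^{\alpha_{i}}}\to M$. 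Its image contains every $m_{j}$, hence is all of $M$, giving the required surjection. Dualizing back produces the closed embedding $A\hookrightarrow\bar{t(B)}$.

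The only genuinely delicate point is the duality dictionary in the first paragraph, and in particular that the dual of a surjection of discrete groups is not merely an injection but a topological embedding onto a \emph{closed} subgroup; this is precisely where the compactness of $Q^{*}$ enters. Once that reduction is in place the remaining algebraic step is elementary, so I expect no real obstacle beyond bookkeeping the choice of the distinct indices $i_{j}$.
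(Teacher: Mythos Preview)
Your proof is correct, but it takes a different route from the paper's. The paper stays entirely on the profinite side: it first embeds $A$ as a closed subgroup of the Cartesian product of its finite continuous images (a product of cyclic $p$-groups), and then embeds that Cartesian product termwise into $\bar{t(B)}$ using that any countable Cartesian product of cyclic $p$-groups sits as a closed subgroup inside any Cartesian product of cyclic $p$-groups of unbounded exponent. You instead dualize the whole statement and work on the discrete side, showing that $(\bar{t(B)})^{*}\cong\bigoplus_{i}C_{p^{\alpha_{i}}}$ surjects onto an arbitrary countable $p$-group $A^{*}$ by sending well-chosen cyclic summands onto a generating set. The two arguments are essentially Pontryagin duals of one another; the paper's version avoids invoking the subgroup/quotient duality dictionary explicitly and is a shade more self-contained, while yours is more in keeping with the paper's own habit of translating structural questions to the discrete side, and makes the ``unbounded exponent'' hypothesis visibly necessary at the step where you choose the indices $i_{j}$.
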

\begin{proof}
Every profinite group is isomorphic to a closed subgroup of the Cartesian product of its discrete images: so every countably-based abelian pro-$p$ group is a closed subgroup is a product of cyclic $p$-groups.  

For a collection $G_i$ of finite groups, we have  $\prod_i H_i \leq_{C} \prod_i G_i$ if $H_i \leq G_i$, for each $i$.  
So $\prod _{i\in\mathbb{N}} (C_{p^i})^{\alpha_{i}}\leq_{C} \prod _{i\in\mathbb{N}} (C_{p^i})^{\beta_{i}}$, for $(\alpha_{i}), (\beta_{i})$ unbounded sequences in $\mathbb{N}\cup\{\aleph_{0}\}$, such that $(\beta_{i})$ has with infinitely many non-zeros.  
Hence, by Theorem \ref{SAI} every countably-based abelian pro-$p$ group with unbounded torsion contains every countably-based abelian pro-$p$ group as a closed subgroup (of the closure of its torsion subgroup).  
\end{proof}
\end{section}

{\bf Acknowledgement.}

This work was done under an EPSRC Doctoral Training Award.  
The author would like to thank the referee and editor for their helpful comments, and to thank his Ph.D. supervisor, Dr. Nikolay Nikolov for many useful discussions on the material, without which this paper would not be possible.

\end{document}